\newcommand{\lrg}[1]{#1}
 \newtheoremstyle{mytheoremstyle}% name
   {9pt}%      Space above, empty = `usual value'
   {9pt}%      Space below
   {\it}% Body font
   {}%         Indent amount (empty = no indent, \parindent = para indent)
   {\bfseries}% Thm head font
   {.}%        Punctuation after thm head
   {\newline}% Space after thm head: \newline = linebreak
   {}%         Thm head spec
 \newtheoremstyle{numberedstyle}% name
   {9pt}%      Space above, empty = `usual value'
   {9pt}%      Space below
   {\normalfont}% Body font
   {}%         Indent amount (empty = no indent, \parindent = para indent)
   {\bfseries}% Thm head font
   {.}%        Punctuation after thm head
   {\newline}% Space after thm head: \newline = linebreak
   {}%         Thm head spec
\newcommand{\modd}[1]{|{\rm d}#1|}
\newcommand{\Deriv}{{\rm D}}
\numberwithin{equation}{section}
\newcommand{\Orbit}{\mathcal{O}}
\newcommand{\Arg}{\operatorname{Arg}}
\theoremstyle{mytheoremstyle}
\newtheorem{thm}{Theorem}[section]% 
\newtheorem{lem}[thm]{Lemma}%
\newtheorem{cor}[thm]{Corollary}%
\newtheorem{prop}[thm]{Proposition}%
\newtheorem{obs}[thm]{Observation}%
\theoremstyle{numberedstyle}
\newtheorem{defn}[thm]{Definition}%
\newtheorem{exercise}[thm]{Exercise}
\renewcommand{\H}{\mathbb{H}}
\newcommand{\HH}{\mathbb{H}}
\newcommand{\real}{\operatorname{real}}
\title[The exponential map is chaotic]{The exponential map is chaotic:\\
   An invitation to transcendental dynamics}
\author{Zhaiming Shen}
\author{Lasse Rempe-Gillen}
\thanks{This work was supported by an LMS-Nuffield Undergraduate Research
  Bursary. The second author is supported by
  a Philip Leverhulme Prize.}
\subjclass[2000]{Primary 37F10; Secondary 30D05, 37D45}
\begin{document}

\begin{abstract}
  We present an elementary and conceptual proof that the complex exponential map is \emph{chaotic} when considered as a dynamical system 
   on the complex plane. (This \lrg{was conjectured} by Fatou in 1926 and first proved by Misiurewicz 55 years later.) The only background required
   is a first undergraduate course in complex analysis. 
\end{abstract}

 \maketitle

 \section{Introduction}
  Let $x_0$ be any real number, and consider what happens when we repeatedly
   apply the function $f(x) = e^x$:
   \[
       x_0 \mapsto e^{x_0} \mapsto e^{e^{x_0}}\mapsto e^{e^{e^{x_0}}} \mapsto \dots.
   \]
  Clearly this sequence\lrg{, the \emph{orbit} of $z_0$ under $f$,} tends rapidly to infinity. Indeed, if
   $x_n=e^{x_{n-1}}$ denotes the $n$-th term in the sequence, then
   $x_n>2^{n-2}$ for $n\geq 2$.

  So the process may not appear terribly interesting.
   This changes rather drastically \lrg{upon replacing the \emph{real} number $x_0$ by
   a \emph{complex} value $z_0$} and considering the \lrg{sequence}
   \begin{equation}\label{eqn:iteration}
       z_n \defeq e^{z_{n-1}} \qquad (n\geq 1).
   \end{equation}
 (See Section \ref{sec:background} for a reminder of the \lrg{properties}
  of the complex exponential function.) 
  \lrg{In contrast to the real case, not every complex
  orbit tends to infinity:
  for example, there is a point
  $z_0\approx 0.318 + 1.337i$ such that $f(z_0)=z_0$, and hence the sequence
  defined by \eqref{eqn:iteration} is constant for $z_0$.}
 In fact, things turn out to be extremely complicated:
 \begin{thm}[Orbits of the complex exponential map]\label{thm:orbits}
  Each of the following sets is dense in the complex plane:
   \begin{enumerate}[1.]
     \item\label{item:escaping} the set of starting values $z_0$ whose orbit
      (defined by~\eqref{eqn:iteration}) diverges to $\infty$;
     \item the set of starting values $z_0$ whose orbit
         forms a dense subset of the plane;
     \item the set of \emph{periodic points}; i.e.\ starting values $z_0$
      such that $z_{n+k}=z_n$ for some $k>0$ and all $n\geq 0$.
   \end{enumerate}
 \end{thm}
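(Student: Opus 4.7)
My plan is to reduce all three density statements to a single \emph{blowup lemma}: for every non-empty open set $U\subseteq\mathbb{C}$ there exists $N\geq 1$ with $f^N(U)\supseteq\mathbb{C}\setminus\{0\}$. Granting this, fix an open ball $B\subseteq\mathbb{C}\setminus\{0\}$ (which is harmless for showing density in $\mathbb{C}$) and let $N$ be as supplied. For \emph{(1)}, pick any real $x\gg 1$ lying in $f^N(B)$; by the introduction's bound $x_n>2^{n-2}$, $x$ has an escaping orbit, and so does any $f^N$-preimage of $x$ in $B$. For \emph{(2)}, let $(V_k)_{k\in\mathbb{N}}$ be a countable basis of $\mathbb{C}$; the blowup lemma makes each $W_k\defeq\bigcup_{n\geq 0} f^{-n}(V_k)$ open and dense, so by Baire's theorem $\bigcap_k W_k$ is a dense $G_\delta$, consisting precisely of points with dense orbit.

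For \emph{(3)}, I would use the strip decomposition $\mathbb{C}=\bigsqcup_{k\in\mathbb{Z}}S_k$ with $S_k\defeq\{z:(2k-1)\pi\leq\Ima z<(2k+1)\pi\}$ and the corresponding inverse branches $\log_k\colon\mathbb{C}\setminus\{0\}\to S_k$, which satisfy $|\log_k'(w)|=1/|w|$. Using the blowup lemma, choose a point $w_0\in B$ together with a preimage $z^*\in B$ under $f^N$ so that the orbit $z^*,f(z^*),\dots,f^{N-1}(z^*)$ is driven deep into the right half-plane---the lemma's flexibility allows routing the orbit essentially freely. Then $|(f^N)'(z^*)|$ is enormous, so the inverse branch $g\defeq\log_{k_0}\circ\cdots\circ\log_{k_{N-1}}$ of $f^N$ (the one sending $w_0\mapsto z^*$) acts on $B$ as an extreme contraction: its image $g(B)$ fits inside a tiny disk around $z^*$ contained in $B$. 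Brouwer's theorem applied to $g\colon\overline{B}\to\overline{B}$ then produces a fixed point of $g$, i.e.\ a periodic point of $f$ of period dividing $N$, inside $B$.

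\textbf{Sketch of the blowup lemma and main obstacle.} Take a small disk $D\subseteq U$. It suffices to force some $f^n(D)$ to contain a vertical segment of length exceeding $2\pi$: then $f^{n+1}(D)$ contains a full circle around $0$, the next iterate an open annulus, and one further application of $f$ covers $\mathbb{C}\setminus\{0\}$. To produce the required segment I would argue dichotomously using $|f'(z)|=e^{\Rea z}$: either an iterate $f^n(D)$ drifts into a region with $\Rea z$ bounded below, where exponential expansion rapidly inflates its diameter past any prescribed strip width, or the orbit is driven close to the negative real axis, whence $f$ sends it near $0$ with strong angular amplification, forcing the following iterate to span a long vertical arc. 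The main obstacle will be executing this dichotomy elementarily---without Montel's theorem or the classification of Fatou components---and taming the wildly uneven expansion of $f$ across the plane into a clean finite-time estimate for when $f^n(D)$ first crosses a strip boundary vertically.
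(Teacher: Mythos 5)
Your reduction of parts (2) and (3) to a covering statement is in the spirit of the paper, but the covering statement you reduce to is false as stated, and its proof is precisely the part you leave open. If $U$ is bounded (a disc $D$, say), then $f^N(\overline{D})$ is compact, so $f^N(D)$ is a \emph{bounded} set for every $N$ and can never contain $\C\setminus\{0\}$. The same boundedness kills the final step of your sketch: a vertical segment of length $2\pi$ in $f^n(D)$ does give a circle, and openness gives a round annulus in the next iterate, but the exponential image of a bounded annulus is again bounded (and omits points of near-maximal modulus with most arguments), so no single further application ``covers $\C\setminus\{0\}$''. The correct statement is the one the paper proves (Corollary \ref{cor:montel}): for every compact $K\subset\C\setminus\{0\}$ there is $N$ with $K\subset f^n(U)$ for all $n\geq N$; this weaker version does suffice for your Baire argument in (2) and for your preimage-of-a-large-real-point argument in (1). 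More seriously, even for the corrected lemma your proposed dichotomy is not a proof, and you acknowledge it is the main obstacle. The failure mode it cannot handle is a small disc whose iterates neither escape nor blow up: since $|f'(z)|=e^{\Rea z}<1$ on the left half-plane, Euclidean diameters can be crushed each time the orbit visits $\{\Rea z<0\}$, so ``real part bounded below somewhere, hence diameters inflate'' cannot be bootstrapped into a finite-time estimate. This is exactly where the paper's key idea enters: first prove density of the escaping set (Theorem \ref{thm:escapingdense}) by working in the backward-invariant slit plane $\C\setminus[0,\infty)$, where every branch of $f^{-1}$ strictly contracts the hyperbolic metric (Lemma \ref{lem:hyper}) while Pick's theorem bounds the accumulated expansion, forcing a non-escaping orbit's expansion factors to degenerate near the positive axis and yielding a contradiction; only then does Euclidean expansion along an escaping orbit (Observation \ref{obs:escapingexpansion}, Proposition \ref{prop:trans}) plus the explicit computation of Observation \ref{obs:montel} give the covering of compacts. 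Some substitute for this hyperbolic (or otherwise non-Euclidean) contraction argument is missing from your plan.

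For part (3), the assertion that ``the lemma's flexibility allows routing the orbit essentially freely'' is unsupported: surjectivity of $f^N$ on $B$ gives you one preimage point, not a single-valued inverse branch of $f^N$ defined on all of $\overline{B}$, mapping it into $B$, and uniformly contracting there. Largeness of $|(f^N)'(z^*)|$ at one point does not control $|g'|$ on all of $B$, and your composition of the branches $\log_k$ can run into branch cuts unless the successive pullbacks are confined to regions where each $\log_k$ is holomorphic and $|\log_k'(w)|=1/|w|$ is small. The paper secures exactly this by pulling back along an escaping orbit inside the discs $D_{2\pi}(z_n)$ far to the right (so each logarithm branch has derivative at most $1/2$ and maps $D_n$ into $D_{n-1}$), and then closing the loop with a branch of $f^{-2}$ from a fixed disc $\Delta\ni z_0$ into $D_n$ (Lemma \ref{lem:periodic}); the contraction mapping theorem then produces a periodic point, which is moreover repelling. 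Your Brouwer/Banach endgame is fine in principle, but without this construction of the branch the argument does not close.
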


 So, by performing
  arbitrarily small perturbations of any given starting
  point, we can always obtain an orbit
  that is finite, one that accumulates everywhere
  and one that eventually leaves every bounded set! In particular, the
  eventual behaviour of a point $z_0$ under iteration of the exponential
  map is usually \emph{impossible to predict numerically}: when computing the value
  of $f(z_0)$, there will always be a (tiny) numerical error, and according to
  Theorem \ref{thm:orbits}, this error can change the
  long-term behaviour of orbits drastically.

 This type of phenomenon is often referred to as \emph{chaos}.
  It is a typical occurrence in all but the simplest
  ``dynamical systems'' (mathematical systems that change over time according
  to some fixed rule), such as the movement of bodies in the solar system~--
  governed by Newton's laws of gravity~-- or, indeed, seemingly
  simple discrete-time processes such
  as the one we are studying here. There are a number of (inequivalent) definitions
  of ``chaos''; the most widely used, and most appropriate for our purposes, was
  introduced by
  Devaney in 1989 \cite{devaneyintroduction}. This concept, formally
  introduced in Definition \ref{defn:devaneychaos} below, captures precisely the topological
  properties usually associated with chaotic systems. The following result is then
  a consequence of Theorem \ref{thm:orbits}.

 \begin{thm}[The exponential map is chaotic] \label{thm:chaotic}
  The exponential map $f\colon\C\to\C; z\mapsto e^z$ is chaotic in the
   sense of Devaney.
 \end{thm}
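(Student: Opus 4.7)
The plan is to verify the three standard ingredients of Devaney chaos (to be formalised in Definition \ref{defn:devaneychaos}): (i) density of periodic points, (ii) topological transitivity, and (iii) sensitive dependence on initial conditions. Each will be extracted essentially for free from Theorem \ref{thm:orbits}, so the main work has already been done.

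First, (i) is literally part 3 of Theorem \ref{thm:orbits}. For (ii), recall that $f$ is topologically transitive if for any nonempty open $U,V\subset\C$ there exists $n\geq 0$ with $f^n(U)\cap V\neq\emptyset$. Given such $U$ and $V$, part 2 of Theorem \ref{thm:orbits} supplies a point $z_0\in U$ whose forward orbit is dense in $\C$, so some iterate $f^n(z_0)$ lies in $V$, proving transitivity.

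For (iii) I would argue directly from parts 1 and 3 of Theorem \ref{thm:orbits}, playing escaping orbits off against bounded (periodic) ones. Fix any $\delta>0$, say $\delta=1$. Let $z\in\C$ and $\epsilon>0$ be given. Split into two cases according to whether the orbit of $z$ is bounded. If it is, use part 1 to choose $w$ with $|w-z|<\epsilon$ whose orbit escapes to $\infty$; then for all sufficiently large $n$ we have $|f^n(w)|>\sup_m|f^m(z)|+\delta$, so $|f^n(w)-f^n(z)|>\delta$. If the orbit of $z$ is unbounded, use part 3 to choose a periodic $w$ with $|w-z|<\epsilon$; its orbit is confined to some disk of radius $R$, and for $n$ with $|f^n(z)|>R+\delta$ the triangle inequality gives $|f^n(w)-f^n(z)|>\delta$. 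In either case the required separation occurs, with the same $\delta$ for every $z$.

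I do not expect a genuine obstacle here: the content of chaoticity is all packed into Theorem \ref{thm:orbits}, and what remains is an unpacking of definitions. The only point that deserves a sentence is the uniformity of the sensitivity constant $\delta$; the argument above works with any fixed $\delta$, even though the auxiliary quantities (the periodic orbit's bound $R$, the time at which the escaping orbit has grown large) depend on $z$. As an alternative, one can shortcut (iii) entirely by invoking the classical theorem of Banks, Brooks, Cairns, Davis and Stacey, which says that on any infinite metric space, topological transitivity together with density of periodic points forces sensitive dependence; this would reduce the proof to verifying (i) and (ii) only.
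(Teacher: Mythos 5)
Your proposal is correct and follows essentially the same route as the paper: Theorem \ref{thm:chaotic} is deduced from Theorem \ref{thm:orbits} by reading off density of periodic points from part 3 and obtaining topological transitivity from the density of points with dense orbits in part 2 (your choice of a dense-orbit point \emph{inside} $U$ neatly avoids the usual tail-of-the-orbit subtlety). Your extra verification of sensitive dependence is sound (for the Euclidean distance) but unnecessary here, since the paper's Definition \ref{defn:devaneychaos} omits that condition precisely because of the Banks--Brooks--Cairns--Davis--Stacey theorem you cite as the alternative.
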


 Theorem \ref{thm:orbits} is (a reformulation of) a famous theorem of
  Misiurewicz from 1981 \cite{misiurewiczexp}, which confirmed a conjecture
  stated by Fatou \cite{fatouentire} in 1926. Misiurewicz's proof
  is entirely elementary, but \lrg{not easy:} it relies
  on a sequence of explicit estimates on the exponential map, its
  iterates and their derivatives. An alternative proof was later given
  \lrg{independently in \cite{bakerrippon}, \cite{alexmisha1984kharkov}} (see also \cite{alexmisha}) and \cite{goldbergkeen}. (According to Eremenko, their research
   was directly motivated
    by the desire to give a more conceptual proof of Misiurewicz's theorem.) 
   A third argument \lrg{can be found in} \cite{bergweileretalwandering}. In all \lrg{these newer works}, the result
   arises as part of a more general theorem, and requires a substantial
   amount of background knowledge in complex analysis and complex dynamics.

 The goal of this note is to give a proof of Theorems \ref{thm:orbits} and
  \ref{thm:chaotic} that is both elementary and conceptual. It requires no background
   beyond 
  a first undergraduate
  course in complex analysis, together with some
  facts from
  \emph{hyperbolic geometry} that can be verified in an elementary manner.
  We shall explain the latter carefully in Section \ref{sec:hyperbolic}, after first reviewing the
  action of the exponential map on the complex plane in Section \ref{sec:background}.
 \lrg{Readers already familiar with this background material can dive in straight in with the 
    proofs in Sections~\ref{sec:escaping} to~\ref{sec:periodicpoints}.
  In Section \ref{sec:further}, we briefly mention further results and 
  open questions; since mathematics is learned best by doing, 
  we end with exercises for the reader in Section~\ref{sec:exercises}.}
  We hope that \lrg{our note} will give readers some insights into the
  beautiful phenomena one encounters when studying the dynamics of transcendental functions of one complex variable, and serve
  as an invitation to learn more about this intriguing subject.

\subsection*{Acknowledgements} \lrg{We thank Alexandre Eremenko, Rongbang Huang, Stephen Worsley and the referees for helpful comments.}

\section{Background material: Exponentials, logarithms and chaos}\label{sec:background}

 \subsection*{Basic notation}
  If $f\colon X\to X$ is a self-map of some set $X$, then 
     \[ f^n \defeq \underset{\text{$n$ times}}{\underbrace{f\circ f\circ \dots \circ f}} \]
   \lrg{is called the $n$-th \emph{iterate} of $f$ (for $n\geq 0$).
   In particular, $f^0(x)=x$ for all $x\in X$.} The \emph{orbit} of a point
    $x_0$ is the sequence $(x_0 , f(x_0), f^2(x_0), \dots)$. In the case where $f(z)=e^z$ is the complex exponential map, this coincides precisely
    with the definition in~\eqref{eqn:iteration}. A point $x\in X$ is \emph{periodic} if there is some $n\geq 1$ such that $f^n(x)=x$. 

 \lrg{We use standard notation for complex numbers $z\in\C$. In particular,} $\Arg(z)\in (-\pi,\pi]$ 
   denotes (the principal branch of) the \emph{argument} of $z$, i.e.\
   the angle that the line segment connecting $0$ and $z$ forms with the real axis. Note that $\Arg(z)$ is undefined at $z=0$ and continuous only for
   $z\notin (-\infty,0)$.
   \lrg{We write $\C^* \defeq \C\setminus \{0\}$ for the
  \emph{punctured plane}, and $D_{\delta}(z_0)$ for the round disc of radius $\delta$ around a point $z_0\in\C$; the \emph{unit disc} is
  $\D \defeq D_1(0)$.}

 \subsection*{The complex exponential function}
  \begin{figure}
    {\includegraphics[width=.5\textwidth]{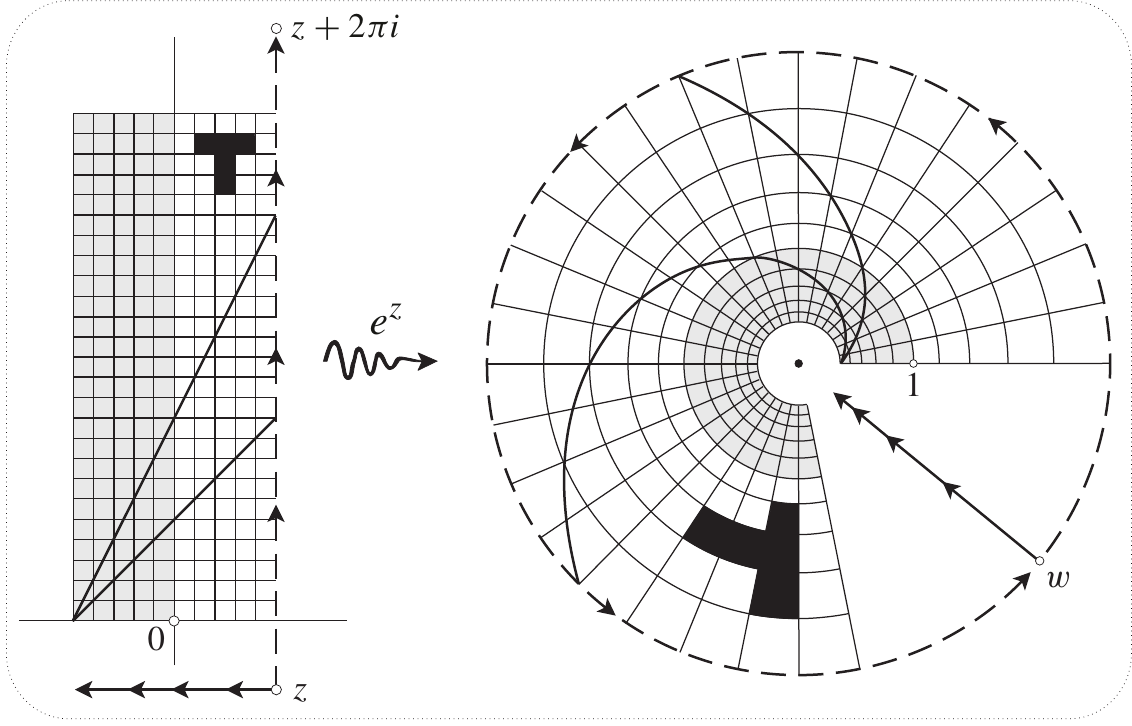}}\hfill
    \caption{\label{fig:exponential}The action of the complex exponential map, as illustrated in Figure [10] on p.\ 82 of 
     \cite{visualcomplexanalysis}. The pattern on the right is the image of the
     checkerboard pattern on the left. (Image courtesy of Tristan Needham.)}
 \end{figure}

  Recall that the complex exponential function is 
    \lrg{the holomorphic function $\exp\colon \C\to\C$ given by}
   \begin{equation} \label{eqn:expdefn}
      \exp(z) \defeq e^{z} = e^x\cdot e^{iy} = e^x \cdot (\cos(y) +i\sin(y)), \qquad\text{where } z=x+iy. \end{equation}
 The representation~\eqref{eqn:expdefn} provides the following geometric interpretation of the action of the exponential map
  (see Figure \ref{fig:exponential}), which the reader should keep in mind:
\begin{itemize}
 \item The function $\exp$ maps horizontal lines to radial rays starting at the origin, and 
   \lrg{wraps vertical lines infinitely often around} concentric circles centered at the origin. The
    modulus $|e^z|$ is large precisely when $\re z$ is large and positive.
 \item In particular, the \emph{right half-plane} $\HH\defeq\{z\in\C\colon  \re z > 0\}$ is mapped (in an infinite-to one manner)
   to the outside of the closed unit disc, and the left half-plane is mapped to the
   punctured unit disc.
 \item The exponential map is strongly \emph{expanding} when $\re z$ is large and positive, and strongly \emph{contracting}
     when $\re z$ is very negative.
\end{itemize}

  \lrg{Since the exponential map spreads the complex plane $\C$ over the punctured plane $\C^*$ in
   an infinite-to-one manner, it is not injective, and hence does not have a  
  well-defined global inverse.
  Instead, we use \emph{branches} of the logarithm: inverse functions to
  injective restrictions of $\exp$  (see \cite[Section~2.VII]{visualcomplexanalysis}). It follows 
  from~\eqref{eqn:expdefn} that such a branch exists wherever 
  there is a continuous choice of the argument. For example, 
   let $\theta\in\R$. Then
     \[ \exp\colon  \{ \zeta \in \C\colon \theta -\pi < \im \zeta < \theta + \pi \} \to 
                       \{\omega \in\C\colon \Arg \omega \not\equiv -\theta\ (\mod 2\pi) \} =: U \]
    is bijective, and hence has a holomorphic inverse $L$ on $U$, given by    \[ L(\omega) \defeq  \log|\omega| + i\cdot\left(\theta + \Arg\frac{\omega}{e^{i\theta}}\right)
         \qquad (\omega\in U).  \] 
   (Here, and throughout, $\log\colon (0,\infty)\to \R$ denotes the natural logarithm.)
    In particular,
     let $\Delta\subset\mathbb{C}$ be a disc that does not contain the origin, and let  
    $\zeta_0\in\mathbb{C}$ with $e^{\zeta_0}\in \Delta$. Taking $\theta = \im \zeta_0$, we see that there is a holomorphic map 
     $L\colon \Delta\rightarrow\mathbb{C}$ with $L(e^{\zeta_0})=\zeta_0$ such that 
     $e^{L(\omega)}=\omega$  and
     $\im \zeta_0 - \pi < \im L(\omega) < \im \zeta_0 + \pi$ 
     for all $\omega \in D$.}

 \lrg{Note that we could replace $\Delta$ by any convex open set
   that omits the origin.
   More generally, branches of the logarithm exist on
   any \emph{simply-connected} domain that does not contain the origin, but
   for us the above cases of discs and slit planes will be sufficient.}

 \subsection*{Devaney's topological definition of chaos}

  We now introduce Devaney's definition of chaos \cite[\S1.8]{devaneyintroduction} that was mentioned in the introduction. This is
  usually stated in the context of \emph{metric spaces} (see below), but we shall restrict to the case of dynamical systems defined
   on subsets of the complex plane. Recall that, if $A\subset X\subset\C$, then $A$ is called 
   \emph{dense} in $X$ if every open set $U\subset\C$ that intersects $X$ also contains a point of $A$.

 \begin{defn}[Devaney Chaos] \label{defn:devaneychaos}
   Let $X\subset\C$ be infinite, and let 
   $f\colon X\to X$ be continuous. We say that $f$ is \emph{chaotic} (in the
   sense of Devaney) if the following two conditions are satisfied.
   \begin{enumerate}[(a)]
     \item The set of periodic points of $f$ is \emph{dense} in $X$.\label{item:chaos_periodic}
     \item The function $f$ is \emph{topologically transitive}; that is,
       for all open sets $U,V\subset \C$ that intersect $X$, there is a point $z\in U\cap X$ and $n\geq 0$ such that
       $f^n(z)\in V$.\label{item:chaos_trans}
   \end{enumerate}
 \end{defn}
   Topological transitivity means precisely that we can move from any part of the
   space $X$ to any other by applying the function $f$ sufficiently often. This property clearly follows from the existence
   of a dense orbit (see Exercise \ref{ex:denseorbit}). In particular, Theorem \ref{thm:chaotic} is a consequence of 
   Theorem \ref{thm:orbits}. (However, in Section \ref{sec:transitivity}, we in fact argue in the converse direction~-- we first prove
   topological transitivity of the exponential map directly, and then deduce the existence of dense orbits.)

\subsection*{Sensitive dependence and spherical distances}

  Devaney's original definition of chaos included a third condition: \emph{sensitive dependence on initial conditions}. 
   It is shown in
   \cite{banksetal} that
   this property is a consequence of topological transitivity and density of periodic points, which is why we were able to omit 
   it in Definition~\ref{defn:devaneychaos}.

  It is nonetheless worthwhile to discuss sensitive dependence on initial conditions, since it
    encapsulates precisely the idea of ``chaos'' discussed in the introduction: Two points that
   are close together may end up a definitive distance apart after sufficiently many applications of the function $f$. (This phenomenon
   has become known in popular culture as the ``butterfly effect''.) Moreover, in the case of the exponential map, we shall be able to 
   establish sensitive dependence before proving either of the remaining two conditions. 

  To give the formal definition, we shall use the notion of a \emph{distance function} (or \emph{metric}) $d$ on a set $X$, as introduced 
    e.g.\ in \cite[Chapter 2]{burkillburkill} or \cite[Chapter 2]{babyrudin}. Readers unfamiliar with this definition need not despair:
    On the one hand, it is only used in this subsection; on the other, it will be sufficient to think of such a function informally
    as a formula that defines some notion of \emph{distance} between two points of $X$. The simplest example of a distance function on $\C$ is
    the usual one, namely \emph{Euclidean distance} $d(z,w) := |z-w|$. 

\begin{defn}[Sensitive dependence on initial conditions]\label{defn:sensitivedependence}
   Let $X\subset\C$, let $f:X\to X$ be continuous, and let $d$ be a distance function on $X$. 
   We say that $f$ exhibits \emph{sensitive dependence on initial conditions} (with respect to $d$) if there exists a constant
     $\delta>0$ with the following property: For every non-empty open set $U\subset X$, there are points
      $x,y\in U$ and some $n\geq 0$ such that $d(f^n(x),f^n(y))\geq \delta$.
\end{defn}

 If we intend to use this definition,  we ought to clarify which distance function we intend to use on the complex plane, and it is 
    tempting to choose Euclidean distance. 

This turns out to be a poor choice: even
    the ``uninteresting'' \emph{real} exponential map discussed at the very beginning of this paper has sensitive dependence with respect to this distance
   (Exercise \ref{ex:realexp}).
  The trouble is that, when orbits tend uniformly to infinity
    for a whole neighbourhood of our starting value, we would consider the corresponding
   behaviour to be \lrg{``stable''}, but orbits \lrg{may end up} extremely far apart 
    in terms of Euclidean distance.
     This issue is resolved by instead using the so-called \emph{spherical distance}, which is 
     obtained by adjoining a single point
    at $\infty$ to the complex plane~-- so a point $z$ is close to $\infty$ whenever $|z|$ is large~--
    and thinking of the resulting space
    $\C\cup\{\infty\}$ as forming a sphere in $3$-space. For this reason, 
     the space $\C\cup\{\infty\}$ is \lrg{called} the \emph{Riemann sphere}.

 \lrg{Instead of introducing spherical distance formally, let us use the 
   informal picture above to decide what sensitive dependence with respect to the sphere
    should mean. If $\zeta, \omega\in\C$ are close to each other on the sphere,
    then there are only two possibilities: either both points 
    are close to infinity, or they are also close in the Euclidean sense. Using this observation,
    \lrg{we define} sensitive dependence on the sphere
    directly and axiomatically:}

 \begin{defn}[Sensitive dependence with respect to spherical distance]\label{defn:sphericalsensitive}
   Let $f\colon \C\to\C$ be a continuous function. We say that $f$ has \emph{sensitive dependence with respect to spherical distance}
    if there exist $\delta>0$ and $R>0$ with the following property. For every non-empty open set $U\subset\C$, there are $z,w\in U$ and
     $n\geq 0$ such that $|f^n(z)|\leq R$ and $|f^n(z)-f^n(w)|\geq \delta$.
 \end{defn}

 \lrg{It turns out (see Exercise \ref{ex:sphericalsensitiveimpliesallothers}) that Definition \ref{defn:sphericalsensitive}
  implies sensitive dependence in the sense of Definition~\ref{defn:sensitivedependence}~--
  no matter which distance function we use on the complex plane.}

 \section{A brief introduction to hyperbolic geometry}
 \label{sec:hyperbolic}

 \emph{Hyperbolic geometry} is a beautiful and powerful tool in one-dimensional complex analysis (as well as in higher-dimensional geometry). 
  When discussing spherical distance in the previous section,
  we briefly
  encountered the idea of using a different notion of distance to the ``standard'' one;
  hyperbolic geometry is another example of this. If $U$ is any open subset of the complex plane that omits more than one point, then
  there is a natural notion of distance on $U$, called the \emph{hyperbolic metric}. (For those who know differential geometry, this 
  is the unique complete conformal metric of constant curvature $-1$ on $U$.) We only need a few elementary facts, all of which can be
  proved using elementary complex analysis. We shall first motivate these statements
  and then collect \lrg{them} in Theorem \ref{thm:pick} and Proposition \ref{prop:hypexamples}.
  For a more detailed introduction to the hyperbolic metric, we refer
   to the book \cite{andersonhyperbolic} or the article \cite{beardonminda}.

  \lrg{Our starting point is the following classical 
     consequence of the standard maximum modulus principle of complex analysis; see
  \cite[Section~3.2]{fishercomplexvariables} or \cite[Section~7.VII]{visualcomplexanalysis}.}
 \begin{lem}[Schwarz lemma] \label{lem:schwarz}
  Suppose that $f\colon \D\rightarrow\D$ is holomorphic and $f(0)=0$. Then either
    \begin{enumerate}
        \item $|f(z)|<|z|$ for every non-zero $z$ in $\D$, and $|f'(0)|<1$, or
        \item there is a real constant $\theta$ such that $f(z)=e^{i\theta}z$ for all $z$, and $|f'(0)|=|e^{i\theta}|=1$.
    \end{enumerate}
 \end{lem}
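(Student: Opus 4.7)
The plan is to reduce the Schwarz lemma to a direct application of the maximum modulus principle, via the standard trick of dividing out the zero of $f$ at the origin. Since $f$ is holomorphic on $\D$ with $f(0)=0$, its power series expansion takes the form $f(z)=a_1 z + a_2 z^2 + \dots$ with $a_1=f'(0)$. Thus the function
\[ g(z) \defeq \begin{cases} f(z)/z & \text{if } z\neq 0, \\ f'(0) & \text{if } z=0, \end{cases} \]
admits a convergent power series on $\D$ and is therefore holomorphic there. Observe in particular that $g(0)=f'(0)$, so bounds on $g$ will translate simultaneously into bounds on $|f(z)|/|z|$ and on $|f'(0)|$.

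Next I would bound $|g|$ on $\D$ by applying the maximum modulus principle on each closed subdisc $\overline{D_r(0)}$ with $0<r<1$. On the circle $|z|=r$, the assumption $f(\D)\subset\D$ gives $|g(z)|=|f(z)|/r\le 1/r$, so the maximum modulus principle yields $|g(z)|\le 1/r$ throughout $\overline{D_r(0)}$. Fixing $z\in\D$ and letting $r\to 1$, I obtain the global inequality $|g(z)|\le 1$ on $\D$, which translates to $|f(z)|\le |z|$ for all $z\in\D$ and $|f'(0)|=|g(0)|\le 1$.

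Finally I would split into the two cases in the statement according to whether the bound on $|g|$ is attained. If $|g(z_0)|=1$ for some $z_0\in\D$ (including $z_0=0$, which corresponds to $|f'(0)|=1$), then by the maximum modulus principle $g$ must be constant on $\D$, and that constant has modulus one; writing it as $e^{i\theta}$ gives $f(z)=e^{i\theta}z$, case~(2). If instead $|g(z)|<1$ throughout $\D$, then $|f(z)|<|z|$ for all nonzero $z\in\D$ and $|f'(0)|<1$, case~(1). The dichotomy is exhaustive, as the two cases correspond precisely to whether the alternative in the maximum modulus principle triggers.

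I do not anticipate a serious obstacle: the only subtle point is verifying that $g$ is genuinely holomorphic at $z=0$, which is immediate from the power-series expansion once one notes that $f(0)=0$ forces the constant term to vanish. Everything else is bookkeeping around the maximum modulus principle and its equality case.
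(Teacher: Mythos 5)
Your argument is correct and is exactly the standard proof via the maximum modulus principle that the paper has in mind: it does not prove the lemma itself but cites it as a classical consequence of the maximum modulus principle, and your construction of $g(z)=f(z)/z$, the limit $r\to 1$ on subdiscs, and the equality-case analysis reproduce that classical argument faithfully. No gaps; the removable-singularity point at $0$ is handled properly via the vanishing constant term.
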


 This lemma can be very useful, but its generality is limited because of the requirement that $f$ should fix $0$, and because its conclusion concerns only
   the derivative \lrg{at the origin. However,} we can move any point $a\in\D$ to zero using a \emph{M\"obius transformation}
   \begin{equation}\label{eqn:mobius}
       M\colon  \D\to\D; \qquad  z\mapsto e^{i\theta} \frac{z-a}{1-\overline{a}z }
   \end{equation}
  (where $\theta\in\R$ is arbitrary). So, if $f\colon \D\to\D$ is \emph{any} holomorphic function, we can pre- and post-compose $f$ with suitable
  M\"obius transformations and apply the Schwarz lemma.
\lrg{Using the chain rule to determine the derivative of the composition, we see that
  \begin{equation}\label{eqn:schwarzpick} |f'(z)|\cdot \frac{1-|z|^2}{1-|f(z)|^2} \leq 1
  \qquad\text{for all $z\in\D$.} \end{equation}}

  We can interpret~\eqref{eqn:schwarzpick} as saying that the derivative of $f$ is at most $1$ when calculated \emph{with respect to a different
   notion of distance} (in the difference quotient usually used to define $|f'|$). More precisely, we call the expression
    \begin{equation}\label{eqn:hypmetricD} \frac{2\modd{z}}{1-|z|^2} \end{equation} 
   the \emph{hyperbolic metric} on $\D$. The idea is that if we have an ``infinitesimal change'' at the point $z$, then its corresponding
   size in the hyperbolic metric is obtained by multiplying its Euclidean length by the quantity
      \begin{equation}\label{eqn:densityD} \rho_{\D}(z) \defeq \frac{2}{1-|z|^2}, \end{equation} 
    called the \emph{density} of the hyperbolic metric.\footnote{%
     The factor $2$ in~\eqref{eqn:densityD} is simply a normalization that \lrg{ensures} that this metric has \emph{curvature} $-1$, rather than some other
     negative constant. It could just as easily be omitted for our purposes, in which case all subsequent densities will also lose a factor of $2$.}
    This can be made precise using the notions of differential geometry (formally, the metric is a way to measure the length of tangent vectors),
    but we can treat~\eqref{eqn:hypmetricD} simply as a formal expression. 

 \begin{remark}
  \lrg{Although this expression is called a ``metric'', it is not a ``distance function'' in the sense 
    of the preceding section. However, it naturally gives rise to such a distance via the 
    notion of \emph{arc-length}; see  \cite[Chapter~3]{andersonhyperbolic}.
    We note that the \emph{spherical metric} can be
    similarly introduced via a conformal metric; that is, a metric that is a scalar multiple 
    of the Euclidean metric at any point, where the scaling factor may depend on the point.}
\end{remark}

  With our new notation, formula~\eqref{eqn:schwarzpick} states, in beautiful simplicity,
    that a holomorphic function $f\colon \D\to\D$ has hyperbolic derivative
    at most $1$ at every point, with equality if and only if $f$ is a M\"obius transformation. What is even better is that we can transfer the metric
    to other domains.

  \begin{defn}[Simply-connected domains] \label{defn:simplyconnected}
    An open connected set $U\subsetneq \C$ is called \emph{simply-connected} if
     there is a conformal isomorphism (i.e., bijective holomorphic function) $\phi\colon  U \to \D$.
 \end{defn}
 \begin{remark}
   Usually, $U$ is called simply-connected if  $\C\setminus U$ has no bounded components; i.e., $U$ has no holes. The
    \emph{Riemann mapping theorem} \cite[Chapter 6]{ahlforscomplexanalysis} 
    states that the two notions are equivalent. Our definition allows us
     to avoid using the Riemann mapping theorem, which is often not treated in a first course on complex analysis.
 \end{remark}

 We can now state the following result, which collects the key properties of the hyperbolic metric.
   \lrg{Its proof is elementary, using only the Schwarz lemma and the fact that
   the
     conformal automorphisms of $\D$ are precisely the M\"obius transformations   
    from~\eqref{eqn:mobius}; see Exercise \ref{ex:mobius}. We leave it to the reader to fill in the details, or to consult \cite[Theorem~6.4]{beardonminda}.}

 \begin{thm}[Pick's theorem] \label{thm:pick} 
  For every simply-connected domain $U\subset\C$, there exists a unique conformal metric
     $\rho_U(z)\modd{z}$ on $U$, called the \emph{hyperbolic metric}, such that the following hold.
   \begin{enumerate}
      \item $\rho_{\D}(z) = \frac{2}{1-|z|^2}$ for all $z\in\D$; \label{item:diskmetric}
      \item if $f\colon U\to V$ is holomorphic, then $f$ does not increase the hyperbolic metric. \lrg{I.e.,}
           \[ \|\Deriv f(z)\|^V_U \defeq |f'(z)|\cdot \frac{\rho_V(f(z))}{\rho_U(z)} \leq 1; \] \label{item:contraction}
     \item for any $z\in U$ and any $f$ as above, we have
           $\|\Deriv f(z)\|^V_U=1$ if and only if $f$ is a conformal isomorphism between $U$ and $V$;\label{item:isomorphism}
     \item if $U\subsetneq V$, then $\rho_U(z)>\rho_V(z)$ for all $z\in U$.\label{item:inclusion}
   \end{enumerate}
 \end{thm}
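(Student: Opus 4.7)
The plan is to \emph{define} the hyperbolic metric on a simply-connected domain $U$ by pullback from the disc. Given any conformal isomorphism $\phi\colon U\to\D$, set
\[ \rho_U(z) \defeq \rho_{\D}(\phi(z))\cdot|\phi'(z)|. \]
The first task is to check that this definition does not depend on the choice of $\phi$. If $\phi_1,\phi_2\colon U\to\D$ are two such isomorphisms, then $M\defeq \phi_2\circ\phi_1^{-1}$ is a conformal automorphism of $\D$, and by Exercise~\ref{ex:mobius} is a M\"obius transformation of the form~\eqref{eqn:mobius}. A direct calculation (or the equality case of~\eqref{eqn:schwarzpick} applied to $M$ and $M^{-1}$) gives $|M'(w)|(1-|w|^2)=1-|M(w)|^2$, which is precisely what is needed for the two pullbacks to agree. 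Property~\ref{item:diskmetric} is then immediate, taking $\phi$ to be the identity.

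Properties~\ref{item:contraction} and~\ref{item:isomorphism} are obtained by transferring the Schwarz--Pick inequality~\eqref{eqn:schwarzpick} through this definition. Given holomorphic $f\colon U\to V$, pick conformal isomorphisms $\phi_U\colon U\to\D$ and $\phi_V\colon V\to\D$, and consider $g\defeq \phi_V\circ f\circ \phi_U^{-1}\colon\D\to\D$. The chain rule, together with the definition of $\rho_U$ and $\rho_V$, reduces the ratio $\|\Deriv f(z)\|_U^V$ to the Schwarz--Pick quantity for $g$ at the point $\phi_U(z)$, which is at most $1$. For the rigidity statement~\ref{item:isomorphism}, equality at some point forces equality in~\eqref{eqn:schwarzpick} for $g$; by pre- and post-composing $g$ with suitable M\"obius maps so as to send both the relevant point and its image to the origin, the equality case of the Schwarz lemma (Lemma~\ref{lem:schwarz}) then shows that $g$ must be a M\"obius transformation of $\D$, and hence $f=\phi_V^{-1}\circ g\circ \phi_U$ is a conformal isomorphism $U\to V$. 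Uniqueness of $\rho_U$ also follows: applied to any conformal isomorphism $\phi\colon U\to\D$, property~\ref{item:isomorphism} forces $\rho_U = |\phi'|\cdot(\rho_\D\circ\phi)$.

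Property~\ref{item:inclusion} follows directly from the preceding two: the inclusion $\iota\colon U\hookrightarrow V$ is holomorphic with $\iota'\equiv 1$, so~\ref{item:contraction} gives $\rho_V(z)\le\rho_U(z)$ for every $z\in U$; equality at a single point would, by~\ref{item:isomorphism}, force $\iota$ to be a conformal isomorphism, contradicting $U\subsetneq V$. The one place where genuine care is needed is the rigidity step~\ref{item:isomorphism}, which threads the equality case of the Schwarz lemma through the definition of $\rho_U$; everything else is a formal unfolding once the pullback definition and the characterization of disc automorphisms as M\"obius transformations are in place.
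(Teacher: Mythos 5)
Your proof is correct and is exactly the argument the paper intends: the paper gives no proof of its own, instead remarking that the result follows from the Schwarz lemma together with the characterization of disc automorphisms as the M\"obius maps~\eqref{eqn:mobius} (Exercise~\ref{ex:mobius}) and referring the reader to Beardon--Minda, and your pullback definition, well-definedness check, and transfer of~\eqref{eqn:schwarzpick} fill in precisely those details. The only point worth an extra line is the ``if'' direction of part~(\ref{item:isomorphism}) (a conformal isomorphism attains equality), which your uniqueness step uses implicitly; it follows immediately from your well-definedness computation, since in that case $g$ is a M\"obius automorphism of $\D$.
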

\ifthenelse{\boolean{escher}}{%
\ifthenelse{\boolean{quality}}{%
\begin{figure}%
  \subfloat[The unit disc $\D$]{\includegraphics[width=.45\textwidth]{CircleLimit4_Disk}}\hfill
   \subfloat[{The slit plane $\C\setminus(-\infty,0]$}]{\includegraphics[width=.45\textwidth]{CircleLimit4_Slitplane}}\\
   \subfloat[The right half-plane $\HH$]{\includegraphics[width=.45\textwidth]{CircleLimit4_RHalfplane}}\hfill
  \subfloat[The strip $\{z\in\C\colon|\im z|<\pi\}$]{\includegraphics[width=.45\textwidth]{CircleLimit4_Strip_cut}}
\caption{\label{fig:hyperbolicsizes}The hyperbolic metric in the unit disc and the simply-connected domains 
    from Proposition \ref{prop:hypexamples}.
    \lrg{Angels} (and devils) all have the same
    hyperbolic size; their Euclidean sizes shrink proportionally with the distance to the boundary. The images were created by Vladimir Bulatov
    (\texttt{www.bulatov.org}) based on M.~C.~Escher's \emph{Circle Limit IV}.}
\end{figure}}{%
\begin{figure}%
  \subfloat[The unit disc $\D$]{\includegraphics[width=.45\textwidth]{CircleLimit4_Disk_sm}}\hfill
   \subfloat[{The slit plane $\C\setminus(-\infty,0]$}]{\includegraphics[width=.45\textwidth]{CircleLimit4_Slitplane_sm}}\\
   \subfloat[The right half-plane $\HH$]{\includegraphics[width=.45\textwidth]{CircleLimit4_RHalfplane}}\hfill
  \subfloat[The strip $\{z\in\C\colon|\im z|<\pi\}$]{\includegraphics[width=.45\textwidth]{CircleLimit4_Strip_cut_sm}}
\caption{\label{fig:hyperbolicsizes}The hyperbolic metric in the unit disc and the simply-connected domains 
    from Proposition \ref{prop:hypexamples}.
    \lrg{Angels} (and devils) all have the same
    hyperbolic size; their Euclidean sizes shrink proportionally with the distance to the boundary. The images were created by Vladimir Bulatov
    (\texttt{www.bulatov.org}) based on M.~C.~Escher's \emph{Circle Limit IV}.}
\end{figure}}}{%
\begin{figure}%%
 \subfloat[The unit disc $\D$]{\includegraphics[width=.45\textwidth]{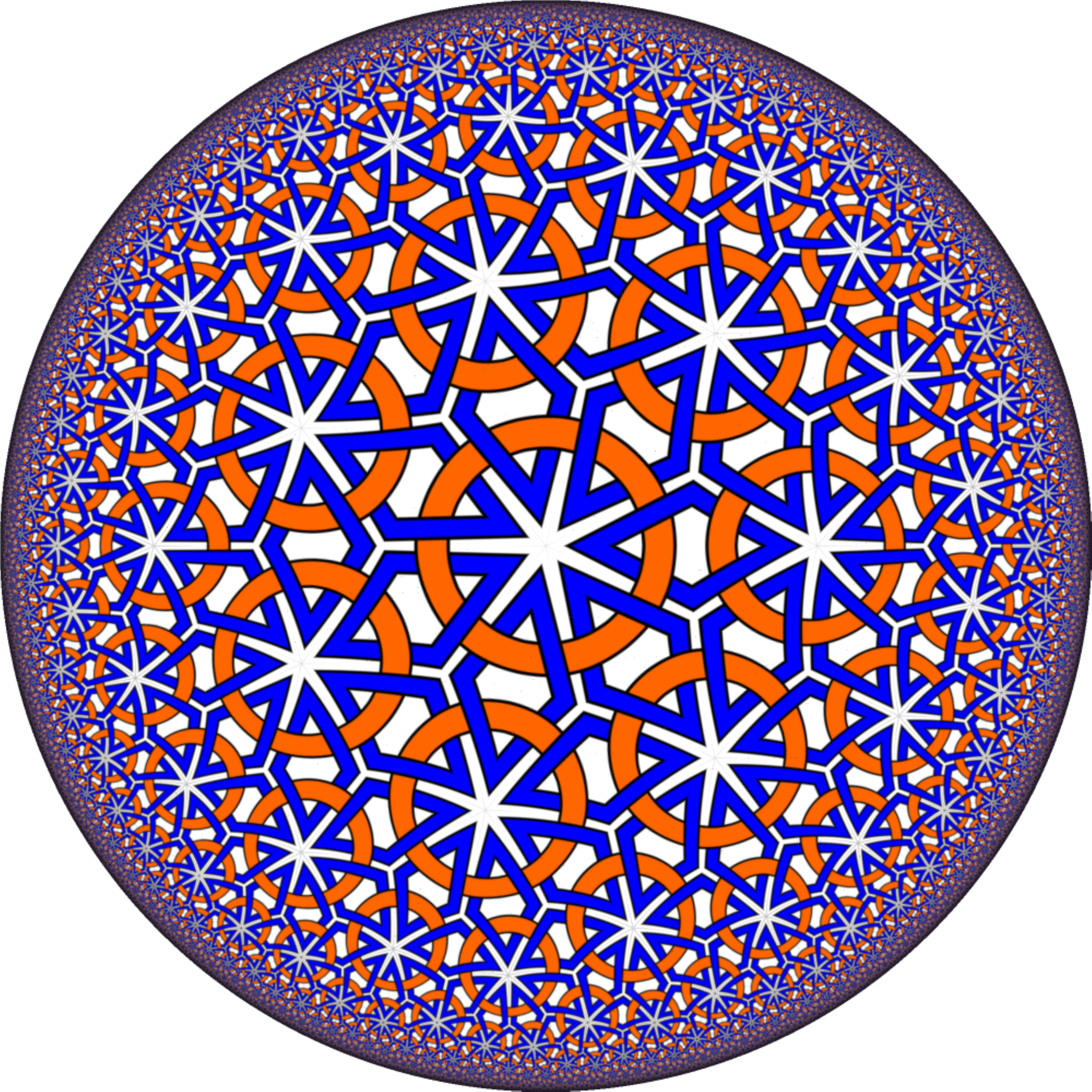}}\hfill
  \subfloat[{The slit plane $\C\setminus(-\infty,0]$}]{\includegraphics[width=.45\textwidth]{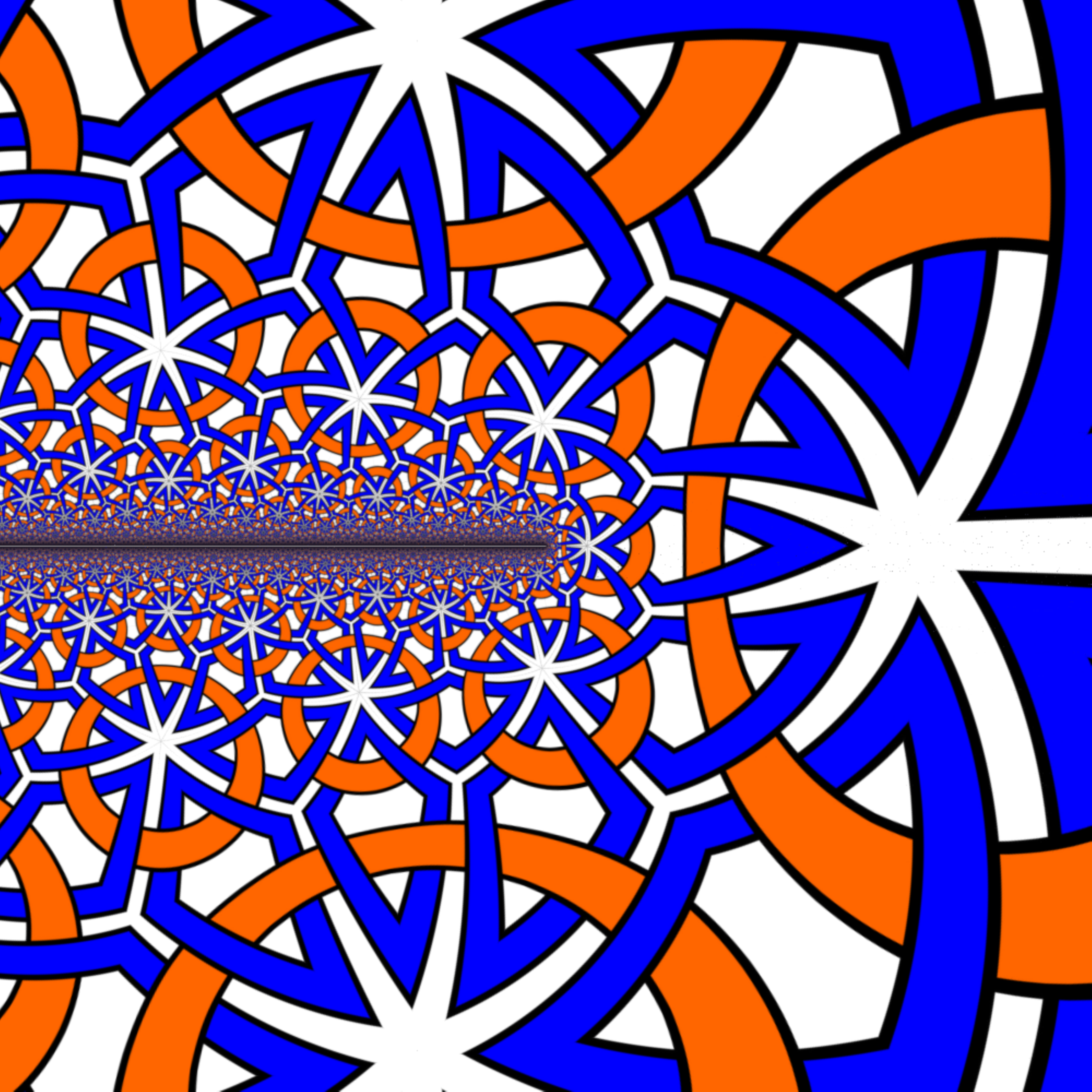}}\\
   \subfloat[{The right half-plane $\HH$}]{\includegraphics[width=.45\textwidth]{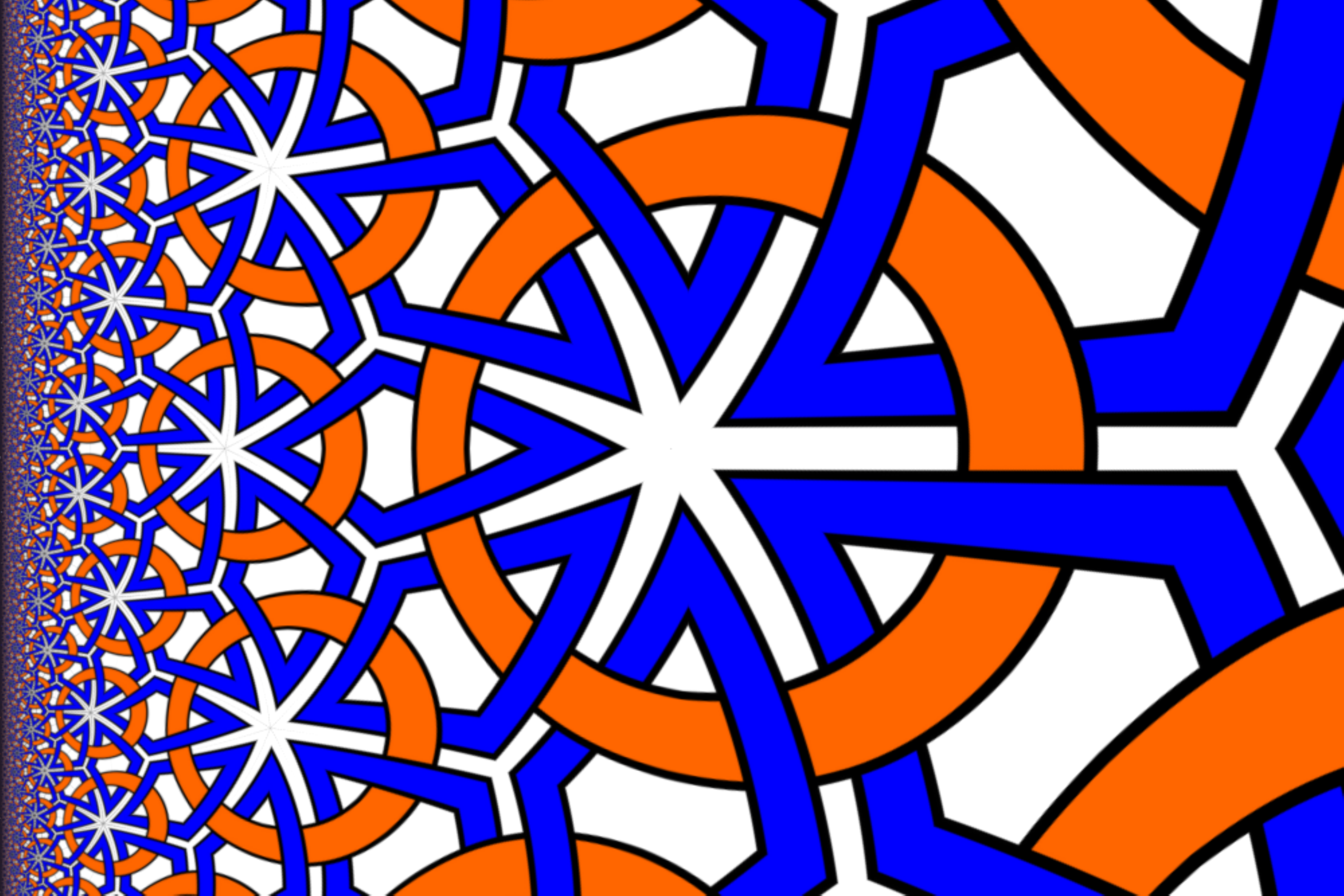}}\hfill
   \subfloat[The strip $\{|\im z|<\pi\}$]{\includegraphics[width=.45\textwidth]{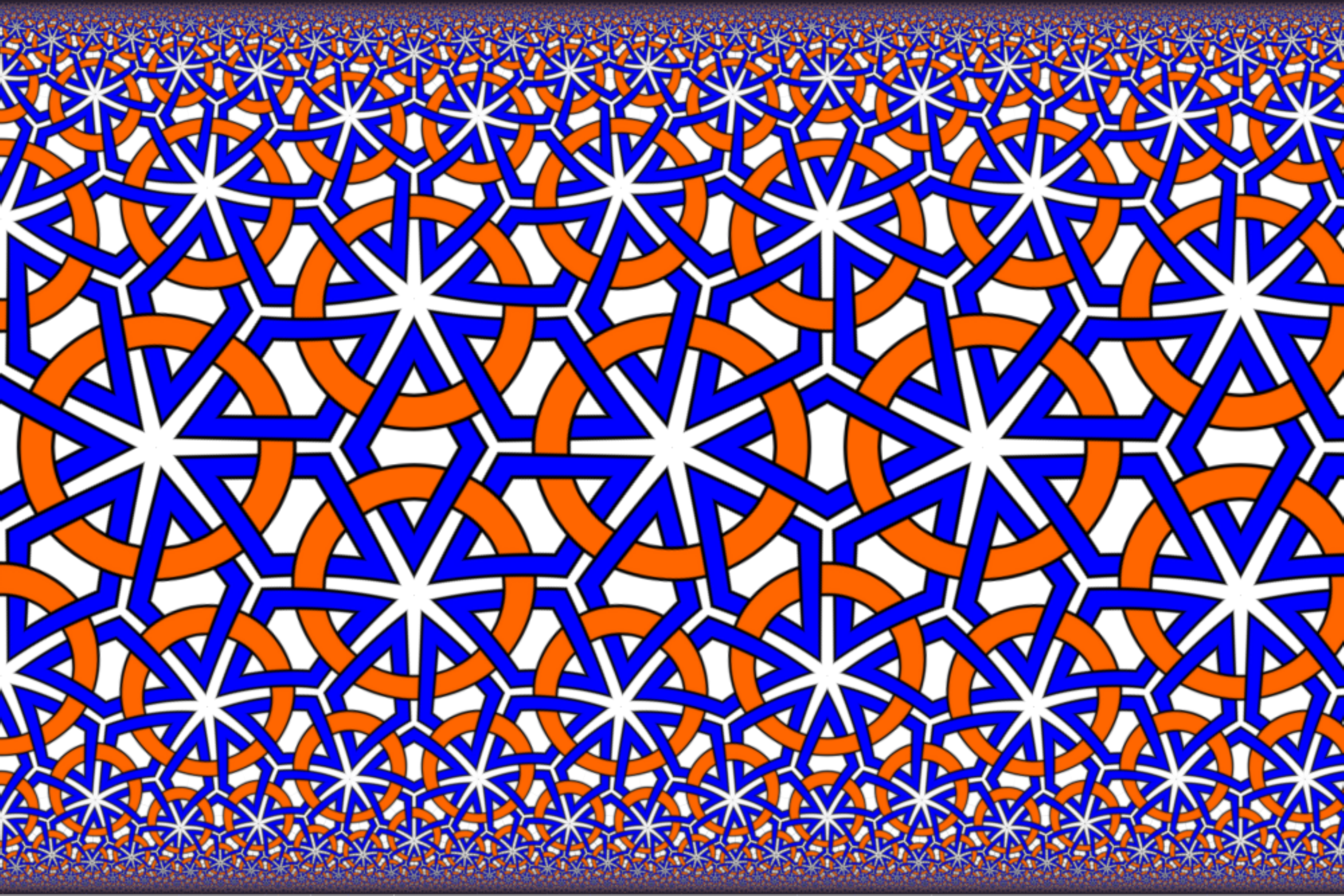}}
\caption{\label{fig:hyperbolicsizes}The hyperbolic metric in the unit disc and the simply-connected domains from Proposition \ref{prop:hypexamples}.
    Red rings all have the same
    hyperbolic size; their Euclidean sizes shrink proportionally with the distance to the boundary. The images were kindly provided by Vladimir Bulatov
    (\texttt{www.bulatov.org}).}
\end{figure}}

 A key property of the hyperbolic metric on a simply-connected domain is that the density $\rho_U(z)$ is inversely proportional to the
   distance of $z$ to the boundary of $U$. In other words, suppose that
   a figure of constant hyperbolic size moves towards the boundary of $U$. Then its
   \emph{Euclidean} size decreases proportionally with the distance to the boundary. (See Figure \ref{fig:hyperbolicsizes}.)  This is a general theorem~--
   see \cite[Formula (8.4)]{beardonminda}~--
   but in the domains that we shall be using, it can be checked explicitly from the following formulae.

 \begin{prop}[Examples of the hyperbolic metric]\mbox{}\label{prop:hypexamples}
  \begin{enumerate}
   \item For the right half plane $\HH=\{z\in\C\colon  \re z >0\}$,
      $\displaystyle{\rho_{\HH}(z) = \frac{1}{\re z}}$.
   \item For a strip of height $2\pi$:
     $\displaystyle{\rho_{\{z\in\C\colon|\im z|<\pi\}}(z) = \frac{1}{2\cos(\im(z)/2)}}$.
   \item For the \emph{positively/negatively slit plane}:
     \[ \rho_{\C\setminus [0,\infty)}(z) = \frac{1}{2|z|\sin(\arg(z)/2)}; \quad
         \rho_{\C\setminus (-\infty,0]}(z) = \frac{1}{2|z|\cos(\Arg(z)/2)}. \]
      (In the first case, the branch of the argument should be taken to range between $0$ and $2\pi$.)
  \end{enumerate}
 \end{prop}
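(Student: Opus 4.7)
The plan is to apply the transformation rule for the hyperbolic metric implied by Pick's theorem: if $\phi \colon U \to V$ is a conformal isomorphism between simply-connected domains, then parts~\eqref{item:isomorphism} and the uniqueness statement of Theorem~\ref{thm:pick} together give
\[ \rho_U(z) = |\phi'(z)| \cdot \rho_V(\phi(z)) \qquad (z\in U). \]
Using part~\eqref{item:diskmetric} as the base case, I will chain together explicit conformal isomorphisms to compute each density in turn.

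First I would handle $\HH$ by using the Cayley transform $\phi\colon \HH \to \D$, $\phi(z) = (z-1)/(z+1)$. A direct computation shows $|\phi'(z)| = 2/|z+1|^2$ and $1-|\phi(z)|^2 = 4\re z/|z+1|^2$, which combine to give $\rho_{\HH}(z) = 1/\re z$. Next, for the strip $S = \{|\im z|<\pi\}$, the map $\psi(z) = e^{z/2}$ is a conformal isomorphism onto $\HH$: it is holomorphic, its inverse is a branch of $2\log$, and $\re \psi(z) = e^{\re z/2}\cos(\im z/2) > 0$ on $S$. Since $|\psi'(z)| = \tfrac{1}{2} e^{\re z/2}$, pulling back $\rho_{\HH}$ gives
\[ \rho_S(z) = \tfrac{1}{2}e^{\re z/2} \cdot \frac{1}{e^{\re z/2}\cos(\im z/2)} = \frac{1}{2\cos(\im z/2)}. \]

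For the two slit planes I proceed analogously using branches of the square root. The principal branch $\sqrt{z}$, which maps $\C\setminus(-\infty,0]$ conformally onto $\HH$, satisfies $|(\sqrt{z})'| = 1/(2\sqrt{|z|})$ and $\re\sqrt{z} = \sqrt{|z|}\cos(\Arg(z)/2)$; pulling back gives the stated formula. For $\C\setminus[0,\infty)$, I use the branch with $\arg\in(0,2\pi)$, mapping onto the upper half-plane, and note that its hyperbolic density is $1/\im z$ (obtained from $\rho_{\HH}$ by the rotation $z\mapsto iz$), which after substitution yields $1/(2|z|\sin(\arg(z)/2))$.

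None of the steps is genuinely hard; the only care required is to verify that each candidate map really is a conformal isomorphism onto the claimed target (injectivity of $\psi(z)=e^{z/2}$ on $S$ follows since $|\im(z/2)|<\pi/2$, and injectivity of the square root branches follows from the definition of the branch). The bookkeeping with arguments and principal branches in the slit-plane case is the most error-prone step, but it reduces to verifying the elementary identities $\re\sqrt{z} = \sqrt{|z|}\cos(\Arg(z)/2)$ and its sine analogue.
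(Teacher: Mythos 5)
Your proposal is correct and follows essentially the same route as the paper: transporting the disc metric via Pick's theorem along a chain of explicit conformal isomorphisms (Cayley transform, $e^{z/2}$ for the strip, and square-root branches for the slit planes, which are just the inverses of the squaring maps $z\mapsto -z^2$ and the reflection used in the paper). The paper only carries out the strip computation explicitly and leaves the rest to the reader, whereas you verify all four formulae; the details you give are accurate.
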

 \begin{proof}
  This can easily be verified by explicit computation using Theorem \ref{thm:pick} (\ref{item:isomorphism}), using the following conformal isomorphisms:
     \begin{align*}
          &\phi_1\colon \H\to \D; \quad z\mapsto \frac{1-z}{1+z}; &\qquad
          &\phi_2\colon S\to \H; \quad z\mapsto e^{\frac{z}{2}}; \\
          &\phi_3\colon \H\to\C\setminus[0,\infty); \quad z\mapsto -z^2; &\qquad
          &\phi_4\colon \C\setminus [0,\infty)\to \C\setminus (-\infty,0]; \quad z\mapsto -z,
     \end{align*}
   where $S=\{z\in\C\colon  |\im z|<\pi\}$.
   As an example, we consider the case of \lrg{$\phi_2$}, and leave the remaining calculations to the reader. We have
      \begin{align*} \rho_S(z) &= \rho_{\H}(\phi_2(z))\cdot |\phi_2'(z)| =
                           \frac{|\phi_2(z)|}{2\re \phi_2(z)} = \frac{1}{2\cos(\arg(\phi_2(z)))} =
                \frac{1}{2\cos(\im(z)/2)}. \qedhere \end{align*}
 \end{proof}

 \section{Escaping points and sensitive dependence} \label{sec:escaping}

 For the remainder of the article
 (unless stated otherwise), $f$ will always denote the exponential map
   \[ f\colon \C\to \C; z\mapsto e^z. \]
  We shall now prove \lrg{the first part} of Theorem
  \ref{thm:orbits}, namely that the escaping set
   \[ I(f)\defeq\{z\in\mathbb{C}\colon f^n(z)\to\infty\} \]
   is dense in the complex plane.
\lrg{The proof
 that we present here
  was briefly outlined in a paper of Mihaljevi\'c-Brandt and the
  second author \cite[Remark on pp.~1583--1584]{wandering}.}

 \begin{thm}[Density of the escaping set]\label{thm:escapingdense}
    The set $I(f)$, consisting of those points $z_0$ whose $f$-orbits converge to infinity, is
     a dense subset of the complex plane $\C$.
 \end{thm}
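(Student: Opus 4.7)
The plan is to construct, inside the given disc $D_\delta(z_0)$, a point whose orbit under $f$ escapes to infinity, by iterated pullback along suitable branches of the logarithm. After shrinking $\delta$ if necessary, I may assume $0 \notin \overline{D_\delta(z_0)}$. Setting $\Delta_0 \defeq D_\delta(z_0)$, I will construct inductively a sequence of closed discs $\overline{\Delta_n} = \overline{D_{r_n}(c_n)}$ and holomorphic branches $L_n$ of $\log$ defined on $\overline{\Delta_n}$ such that $L_{n+1}(\overline{\Delta_{n+1}}) \subset \Delta_n$ and $|c_n|\to\infty$. Given this, the nested compact sets $E_n \defeq (L_1 \circ \cdots \circ L_n)(\overline{\Delta_n}) \subset \Delta_0$ have non-empty intersection, and any $w \in \bigcap_n E_n$ satisfies $f^n(w) \in \overline{\Delta_n}$ for every $n$, whence $|f^n(w)| \geq |c_n| - r_n \to \infty$ as required.

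The main quantitative ingredient is the Euclidean contraction of log on a disc far from the origin: if $|c_{n+1}| > 2r_{n+1}$ then $|L_{n+1}'(\zeta)| = 1/|\zeta| \leq 2/|c_{n+1}|$ on $\overline{\Delta_{n+1}}$, so $L_{n+1}(\overline{\Delta_{n+1}})$ has Euclidean diameter at most $4r_{n+1}/|c_{n+1}|$. In the inductive step I would choose $c_{n+1}$ in the image $f(\Delta_n) = \exp(\Delta_n)$ with $|c_{n+1}|$ close to the maximal value $e^{\re(c_n) + r_n}$; take $L_{n+1}$ to be the branch of $\log$ whose image lies in the strip of height $2\pi$ containing $\Delta_n$, so that $L_{n+1}(c_{n+1}) \in \Delta_n$; and pick $r_{n+1}$ small enough, for example $r_{n+1} \leq r_n\,|c_{n+1}|/4$, to guarantee $L_{n+1}(\overline{\Delta_{n+1}}) \subset \Delta_n$.

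The hardest part will be ensuring $|c_n|\to\infty$. Because $c_{n+1}$ is forced into the bounded set $f(\Delta_n)$, one must drive the centres out to infinity rather than letting them shadow the (possibly non-escaping) forward orbit of $z_0$. The key observation is that once $r_n$ exceeds $\pi$, the disc $\Delta_n$ spans a full vertical period of $\exp$, so $f(\Delta_n)$ meets every direction from the origin; one may then place $c_{n+1}$ close to the positive real axis, obtaining $\re(c_{n+1}) \approx |c_{n+1}|$, after which both $|c_n|$ and $r_n$ grow as a tower of exponentials. The delicate point is reaching this regime from an initial disc $\Delta_0$ of small radius $\delta$; a finite number of preparatory steps, exploiting the freedom to shift each branch $L_n$ by $2\pi i k$, should suffice to steer the centres into a favourable position before the rapid growth takes over.
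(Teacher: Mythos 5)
Your reduction is sound as bookkeeping: pulling back a forward chain of discs $\overline{\Delta_{n+1}}\subset f(\Delta_n)$ along branches of the logarithm does produce nested compact sets in $\Delta_0$, any point $w$ of their intersection satisfies $f^n(w)\in\overline{\Delta_n}$, and the endgame in the ``favourable regime'' (radius exceeding $\pi$ and centre of large real part, so that $f(\Delta_n)$ contains a full round annulus and you can recentre on the positive real axis) is fine -- it is essentially Observation \ref{obs:montel} of the paper. The genuine gap is the step you yourself flag as delicate and then dismiss: getting from an arbitrary small disc $\Delta_0=D_\delta(z_0)$ into that regime in finitely many steps. This is not a technicality; it is the whole content of the theorem. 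Your only freedom at each stage is to choose a sub-disc of the forward image $f(\Delta_n)$ (the $2\pi i k$ shifts of the branches only change which preimage the pullback lands on, and are in any case constrained by the requirement $L_{n+1}(\overline{\Delta_{n+1}})\subset\Delta_n$; they give no control over the forward images). So if the successive images of $\Delta_0$ happened to stay uniformly small and remain in a region where $|f'(z)|=e^{\re z}$ is bounded -- i.e.\ if $z_0$ exhibited stable, Fatou-type behaviour -- then no choice of centres $c_{n+1}\in f(\Delta_n)$ and radii could ever produce a disc of radius $\pi$. Ruling out exactly this possibility is Misiurewicz's theorem, and it must use something specific to $e^z$: your steering strategy is available verbatim for $g(z)=(e^z-1)/4$, yet for $g$ the forward images of any disc inside $\D$ stay inside $\D$ forever and the analogous density statement is false. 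Hence ``a finite number of preparatory steps should suffice'' assumes precisely what has to be proved. (A smaller point: you also need $|c_n|-r_n\to\infty$, not merely $|c_n|\to\infty$, so the radii must be kept, say, below $|c_n|/2$ throughout.)

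For comparison, the paper closes this gap with hyperbolic geometry rather than a direct construction. Arguing by contradiction, if every image $f^n(D)$ misses $I(f)$, then in particular it misses $[0,\infty)\subset I(f)$, so the whole forward orbit of $D$ stays in the backward-invariant slit plane $U=\C\setminus[0,\infty)$. Lemma \ref{lem:hyper} shows that $f$ strictly expands the hyperbolic metric of $U$, while Pick's Theorem \ref{thm:pick} bounds the hyperbolic derivative of $f^n\colon D\to U$ by $1$; consequently the per-step expansion factors tend to $1$, which by the second half of Lemma \ref{lem:hyper} forces $\min\bigl(|w_n|,\Arg(w_n)\bigr)\to 0$ along the orbit of the centre, and a minimality argument on the smallest finite accumulation point in $[0,\infty)$ produces the contradiction. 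Some ingredient of this kind (hyperbolic contraction, or Misiurewicz-style explicit estimates showing that small discs cannot remain small under iteration of $e^z$) is what your proposal is missing; once such a blow-up statement is in hand, your pullback construction would indeed finish the argument, but by then it is doing little work that Observation \ref{obs:montel} and Proposition \ref{prop:trans} do not already do.
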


  The idea of the proof can be outlined as follows. Let $D\subset\C$ be
   any small round disc. Since $\R\subset I(f)$,
   and any preimage of an escaping point is also escaping, there is nothing
   to prove if $U$ contains a point $z$ whose orbit contains a point on
    the real axis.

  Otherwise, $D$, $f(D)$ and all forward images $f^n(D)$ are 
   contained in the slit plane
   \[ U\defeq\mathbb{C}\setminus[0,\infty). \]
   Note that this set $U$ is backward invariant under $f$; i.e.,
    $f^{-1}(U)\subseteq U$. It follows that every branch $L$ of the logarithm
    on $U$ is a holomorphic map $L\colon U\to U$, and hence
    locally contracts the hyperbolic metric of $U$,
    as discussed in the previous section.

  Moreover, if \lrg{$D\cap I(f)=\emptyset$,} then the \lrg{sequence of domains
    $f^n(D)$
    necessarily has at least one finite accumulation point}. Using
    the geometry of $U$, we shall see that this implies that the map
    $f$ expands 
    the hyperbolic metric by a definite factor
    infinitely often along the orbit of $D$. On the other
    hand, the hyperbolic derivative of $f^n$ with respect to $U$ remains
    bounded as $n\to \infty$ by Pick's theorem. This yields the desired
    contradiction.

 From this summary of the proof, it is clear that understanding the expansion of the
   hyperbolic metric of $U$ by $f$ plays a key role in the argument. In the following
   lemma, we investigate where this expansion takes place. 

   \begin{lem}[The exponential map expands the hyperbolic metric] \label{lem:hyper}
    \lrg{The complex exponential map $f$ locally expands the hyperbolic metric on the domain $U\defeq\mathbb{C}\setminus[0,\infty)$. That is, $\|\Deriv f(\zeta)\|_{U}^{U}>1$
    for all $\zeta \in f^{-1}(U)$.} 

      Moreover,
     suppose that 
     $\zeta_n\in f^{-1}(U)$ is a sequence with
     $\|\Deriv f(\zeta_n)\|_{U}^{U}\to 1$ as $n\to\infty$.
     Then $\min\bigl(|\zeta_n|,\Arg(\zeta_n)\bigr)\to 0$ as $n\to\infty$.
   \end{lem}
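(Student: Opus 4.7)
The plan is to recognize that $f$ restricts to a conformal isomorphism from each horizontal strip of height $2\pi$ onto $U$, and then combine this with Pick's theorem. For each integer $k$, let $V_k \defeq \{\zeta \in \C \colon 2\pi k < \Ima \zeta < 2\pi(k+1)\}$; the review of branches of the logarithm in Section~\ref{sec:background} shows that $f$ maps $V_k$ conformally onto $U$. Since no $V_k$ meets the real axis, $V_k \subsetneq U$ strictly (e.g., $-1 \in U\setminus V_k$). Pick's theorem~\ref{thm:pick}(\ref{item:isomorphism}) gives $|f'(\zeta)|\,\rho_U(f(\zeta)) = \rho_{V_k}(\zeta)$ for $\zeta \in V_k$, while~(\ref{item:inclusion}) yields $\rho_U(\zeta) < \rho_{V_k}(\zeta)$. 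Dividing,
\[
   \|\Deriv f(\zeta)\|_U^U = \frac{\rho_{V_k}(\zeta)}{\rho_U(\zeta)} > 1,
\]
which proves the first assertion.

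For the ``moreover'' I would make the ratio explicit via Proposition~\ref{prop:hypexamples}. Translating $V_k$ to the strip $\{|\Ima z|<\pi\}$ gives $\rho_{V_k}(\zeta) = 1/\bigl(2\sin(\theta/2)\bigr)$, where $\theta \defeq \Ima\zeta - 2\pi k \in (0,2\pi)$ denotes the height of $\zeta$ within its strip; meanwhile $\rho_U(\zeta) = 1/\bigl(2|\zeta|\sin(\arg\zeta/2)\bigr)$ with $\arg\zeta \in (0,2\pi)$. Hence
\[
    \|\Deriv f(\zeta)\|_U^U = \frac{|\zeta|\sin(\arg\zeta/2)}{\sin(\theta/2)}.
\]

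The contrapositive to establish is that if $|\zeta_n|$ and $|\Arg\zeta_n|$ are both bounded below by some $\epsilon>0$, then the ratio stays bounded away from $1$; the stated conclusion follows, using the symmetry $\zeta\mapsto\bar\zeta$ (which preserves $U$, the ratio, $|\zeta|$ and $|\Arg\zeta|$) to reduce to $\Arg\zeta_n\in[\epsilon,\pi]$. If $|\zeta_n|\to\infty$ along a subsequence, then $\sin(\arg\zeta_n/2)\geq\sin(\epsilon/2)$ while $\sin(\theta_n/2)\leq 1$, so the displayed formula forces the ratio to $\infty$. Otherwise $|\zeta_n|\leq M$ is bounded; then $|\Ima\zeta_n|\leq M$ confines $k_n$ to a finite set, and after passing to a subsequence with $k_n=k$ constant, compactness yields $\zeta_n\to\zeta_*$ with $|\zeta_*|\in[\epsilon,M]$ and $\Arg\zeta_*\in[\epsilon,\pi]$. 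If $\zeta_*\in V_k$, continuity and the first assertion give ratio $\to \|\Deriv f(\zeta_*)\|_U^U>1$. If $\zeta_*\in\partial V_k$, then $\theta_n\to 0$ or $2\pi$, whence $\rho_{V_k}(\zeta_n)\to\infty$; the bounds on $|\zeta_*|$ and $\Arg\zeta_*$ ensure $\zeta_*\notin[0,\infty)$, so $\rho_U(\zeta_n)\to\rho_U(\zeta_*)<\infty$, and the ratio again blows up.

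The step I expect to be most delicate is the boundary case $\zeta_*\in\partial V_k$ of the compactness argument: one must verify that the limit still lies in $U$ so that $\rho_U(\zeta_*)$ remains finite. This is precisely where the twin assumptions $|\zeta_n|\geq\epsilon$ and $|\Arg\zeta_n|\geq\epsilon$ are both needed (the first rules out $\zeta_*=0$, the second rules out $\zeta_*$ landing on the positive real axis), which explains why the conclusion naturally involves the minimum of the two quantities rather than either one alone.
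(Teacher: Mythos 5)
Your argument is correct, and for the expansion inequality it coincides with the paper's own conceptual proof: the components of $f^{-1}(U)$ are precisely your strips $V_k$, $f$ maps each conformally onto $U$, and Pick's Theorem \ref{thm:pick}, parts (\ref{item:isomorphism}) and (\ref{item:inclusion}), turn the strict inclusion $V_k\subsetneq U$ into $\|\Deriv f(\zeta)\|_U^U=\rho_{V_k}(\zeta)/\rho_U(\zeta)>1$. For the ``moreover'' part you take a somewhat different route from either of the paper's two alternatives: the paper argues qualitatively (the hyperbolic density of a simply-connected domain is comparable to the reciprocal of the distance to the boundary, so $\|\Deriv f(\zeta_n)\|_U^U\to1$ keeps $\zeta_n$ within bounded distance of $[0,\infty)$ while forbidding accumulation inside $U$), or else computes directly the uniform bound $\|\Deriv f(\zeta)\|_U^U\geq 1/|\cos(\theta/2)|$ with $\theta=\arg\zeta\in(0,2\pi)$, which immediately gives the stronger conclusion $\Arg(\zeta_n)\to 0$. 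You instead make the ratio explicit via Proposition \ref{prop:hypexamples} (your formula, with $\theta$ the height within the strip, agrees with the paper's since that height is congruent to $\arg(e^{\zeta})$ modulo $2\pi$) and then run a soft subsequence/compactness analysis; your three cases (modulus tending to infinity, interior limit, limit on $\partial V_k$ but inside $U$) are handled correctly under your hypotheses. The one caveat is that your contrapositive assumes $|\Arg\zeta_n|\geq\epsilon$, so what you literally prove is $\min\bigl(|\zeta_n|,|\Arg\zeta_n|\bigr)\to 0$ rather than the signed statement (a sequence $\zeta_n\to 0$ with $\Arg\zeta_n\equiv-\pi/2$ satisfies your conclusion but not, read literally, the lemma's). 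This is cosmetic: the absolute-value version is exactly what the paper's qualitative argument establishes and all that is used in the proof of Theorem \ref{thm:escapingdense}, and the quantitative bound $1/|\cos(\theta/2)|$ would recover the signed version and more (cf.\ Exercise \ref{ex:derivativeestimate}) if desired. Overall, the paper's computation buys a sharp, quantitative statement, while your compactness argument buys freedom from the sine estimates at the cost of being non-quantitative.
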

   \begin{proof}
    \lrg{We shall give two justifications of this result. One is more
     conceptual and uses facts about the hyperbolic metric that were discussed, though 
     not necessarily explicitly proved,
     in the previous section. The second is a completely
     elementary calculation; however, it hides some of the intuition.}

    To give the first argument, let $S$ be a connected component of
     the set
     \[ V\defeq f^{-1}(U)=\{a+ib\colon  \frac{b}{2\pi}\notin\Z \}\subset U. \]
    Then $S$ is a strip of height $2\pi$, and
     $f\colon S\to U$ is a conformal isomorphism.
     Let $\zeta\in S$ and set $\omega \defeq  f(\zeta)\in U$. By Pick's theorem,
     $f|_S$ is a hyperbolic isometry between $S$ and $U$, and hence
    \begin{equation} \label{eqn:expansion} \|\Deriv f(\zeta)\|_{U}^{U} =
    \|\Deriv f(\zeta)\|_S^U \cdot \frac{\rho_S(\zeta)}{\rho_U(\zeta)} =
         \frac{\rho_S(\zeta)}{\rho_U(\zeta)} > 1.    \end{equation}

    Recall that the density at $z$
     of the hyperbolic metric in a simply
     connected domain $G$ is comparable to
     $1/\dist(z,\partial G)$. (For $U$ and $S$, this can be verified
     explicitly, using Proposition \ref{prop:hypexamples}.) We apply this fact twice: 
     Notice first that every point in the strip $S$ has distance at most
     $\pi$ from the boundary, and hence 
     $\rho_S$ is bounded from below by a positive constant.
     So if
     $\zeta_n$ is a sequence as in the claim, then, using the same fact for $\rho_U$, the distance
      $\dist(\zeta_n,[0,\infty)) = \dist(\zeta_n,\partial U) \leq \operatorname{const}/\rho_U(\zeta_n)$
      remains bounded as $n\to\infty$. Also, by~\eqref{eqn:expansion}, $\zeta_n$ does not accumulate at any point of $U$. 
    A sequence of points converging to $\infty$ while remaining within a bounded distance from the positive real axis must 
      have arguments tending to $0$, and 
      a point that is close to a finite point of $\partial U$ either has argument close to $0$ or is close to $0$ itself. So %%This implies that 
      $\min\bigl(|\zeta_n|,\Arg(\zeta_n)\bigr)\to 0$, as claimed.

   On the other hand, the claims can be verified directly from the
    formulae.  Indeed, let us write
    $\zeta=re^{i\theta}$ with $\theta\in(0,2\pi)$; then
     \[ \omega = f(\zeta )=e^{re^{i\theta}}=e^{r(\cos\theta+i\sin\theta)}. \]
   Now we compute the hyperbolic derivative directly, using Proposition \ref{prop:hypexamples}:
    \[
   \|\Deriv f(\zeta)\|_U^U =|f'(\zeta)|\cdot
                \frac{2|\zeta|\cdot\sin(\frac{1}{2}\arg(\zeta))}%%
              {2|e^{\zeta}|\cdot\sin(\frac{1}{2}\arg(e^{\zeta}))}
                 = \frac{r\cdot\sin(\theta/2)}{\sin(\frac{1}{2}\arg(e^{\zeta}))},
     \]
  where $\arg(\zeta)$ and $\arg(e^{\zeta})$ are chosen in the range $(0,2\pi)$. 
  To further simplify this expression, observe that $\arg(e^{\zeta}) \equiv r\cdot \sin(\theta)\ (\mod 2\pi)$. Since $|\sin|$ is $\pi$-periodic, we hence see that
   $|\sin(\arg(e^{\zeta})/2)|=\sin(r\sin(\theta)/2)$. Furthermore, 
    $|\sin(x)|\leq |x|$ for all $x\in\R$, so
        \begin{align}\label{eqn:eta}
            \|\Deriv f(\zeta)\|_U^U &=
        \frac{r\cdot\sin(\theta/2)}{|\sin(\frac{r}{2}\cdot\sin\theta)|}
            \geq\frac{r\cdot\sin(\theta/2)}{\frac{r}{2}\cdot|\sin\theta|}
\\\notag
             &=\frac{2\cdot\sin(\theta/2)}{|\sin\theta|}=\frac{1}{|\cos(\theta/2)|}>1.
        \end{align}
  (In the final equality, we used the trigonometric formula
    $\sin(2x)=2\sin(x)\cos(x)$.)
    If $\zeta_n$ is such that $\|\Deriv f(\zeta_n)\|_U^U\to 1$, then
   \eqref{eqn:eta} implies \lrg{$|\cos(\theta_n/2)|\to 1$, where $\theta=\arg(\zeta_n)$, and hence
   $\Arg(\zeta_n)\to 0.$}
   \end{proof}
 \begin{remark}
   Observe that the second argument yields the stronger conclusion
      $\Arg(\zeta_n)\to 0$. 
  In fact, a slightly more careful look at the estimates shows
   that $\|\Deriv f(\zeta_n)\|_U^U\to 1$
   if and only if $\Arg(\zeta_n)\to 0$ and $\im \zeta_n\to 0$ 
     (see Exercise \ref{ex:derivativeestimate}). However,
   this will not be required in the proofs that follow.
 \end{remark}

   \begin{proof}[Proof of Theorem \ref{thm:escapingdense}]
    Let $w_0\in\C$ be arbitrary, and consider its orbit $(w_n)_{n\geq 0}$, i.e.\ 
   $w_n=f^n(w_0)$.  
    Let $D$ be a arbitrary disc around
    $w_0$; it is enough to show that $f^n(D)\cap I(f)\neq \emptyset$ for
     some $n$. So assume, by contradiction,
     that $f^n(D)\cap I(f)=\emptyset$ for all $n$.
    Since $[0,\infty)\subset I(f)$,
    we then have
        \[D_n \defeq  f^n(D)\subset U =
      \mathbb{C}\setminus[0,\infty) \qquad \text{for all $n\geq 0$.}\]

  By Pick's theorem \ref{thm:pick}, the hyperbolic derivative of $f^n$,
    as a map from $D$ to $U$, satisfies
        \[ \delta_n \defeq  \|\Deriv f^n(w_0)\|_D^U =
        |(f^n)'(w_0)|\cdot \frac{\rho_U(w_n)}{\rho_D(w_0)}\leq 1. \]
    Set $\eta_n\defeq \|\Deriv f(w_n)\|_U^U $; then $\delta_{n+1}=\delta_n\cdot\eta_n$ for all $n\geq 0$.

  By Lemma \ref{lem:hyper}, we know that $\eta_n>1$. Hence, for the
   sequence $\delta_n$ to remain bounded, we must necessarily have
   $\eta_n\to 1$. Indeed,
    $(\delta_n)_{n\geq1}$ is an increasing and bounded, and hence convergent, sequence.
    Thus
        \[ \lim_{n\rightarrow\infty}\eta_n= \lim_{n\rightarrow\infty}\frac{\delta_{n+1}}{\delta_{n}}=
        \frac{\displaystyle{\lim_{n\rightarrow\infty}\delta_{n+1}}}{%%
        \displaystyle{\lim_{n\rightarrow\infty}\delta_{n}}}=
        1.  \]

   By Lemma \ref{lem:hyper}, we see that
   \begin{equation} \label{eqn:argument}
      \min\bigl(|w_n|,\Arg(w_n)\bigr)\to 0,
    \end{equation} and hence
    all finite accumulation points of  $(w_n)$ must
     lie in the interval $[0,\infty)$.
    Since $w_0$ is not in the escaping set,
    the set of such finite accumulation points is nonempty.

    As this set is closed, we can let
    $z_0\in [0,\infty)$ be the smallest finite accumulation point
    of the sequence $(w_n)$; say $w_{n_k}\to z_0$.
   Notice that $\re w_{n_{k-1}}=\log|w_{n_k}|\rightarrow\log z_0 <z_0$. By
    choice of $z_0$, the sequence $w_{n_{k-1}}$ cannot have a finite
    accumulation point, and hence $|\im w_{n_{k-1}}|\to \infty$. This
    contradicts~\eqref{eqn:argument}, and we are done.
 \end{proof}

 The proof leaves open the possibility that the escaping set has nonempty
  interior (or even, a priori, that $I(f)=\C$). We shall now exclude this possibility,
  which then allows us to establish sensitive dependence on initial conditions. 

 \begin{thm}[Preimages of the negative real axis]\label{thm:negativereal}
    Let $W\subset\C$ be open and nonempty. Then there are infinitely
    many numbers $n\geq 0$ such that $f^n(W)\cap (-\infty,0]\neq \emptyset$.
 \end{thm}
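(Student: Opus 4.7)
I would argue by contradiction, supposing that $f^n(W)\cap (-\infty,0]\neq \emptyset$ for only finitely many $n$, so that there is some $N$ with $f^n(W)\cap (-\infty,0]=\emptyset$ for all $n\geq N$. Since $f$ is an open map, $f^N(W)$ is open; picking an open disc $D\subset f^N(W)$ we have $f^n(D)\subset f^{n+N}(W)\subset U_+\defeq \C\setminus(-\infty,0]$ for every $n\geq 0$. The plan is to contradict Pick's theorem along a suitable orbit through $D$.

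To locate such an orbit I would re-examine the proof of Theorem~\ref{thm:escapingdense}: the argument there does not really use the hypothesis $D\cap I(f)=\emptyset$ but only the weaker assumption that $f^n(D)\subset U=\C\setminus[0,\infty)$ for every $n\geq 0$. Its contrapositive therefore supplies $n_0\geq 0$ and $z_0\in D$ with $f^{n_0}(z_0)\in[0,\infty)$. Because $0\in f^{n_0}(D)$ would violate the standing assumption (as $0\in(-\infty,0]$), we must have $a\defeq f^{n_0}(z_0)>0$. Setting $a_0\defeq a$ and $a_{m+1}\defeq e^{a_m}$, the orbit $(a_m)$ stays on the positive real axis and grows double-exponentially.

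The endgame is then a direct computation. Applying Pick's theorem (Theorem~\ref{thm:pick}) to the holomorphic map $f^n\colon D\to U_+$ gives
\[
|(f^n)'(z_0)|\cdot \rho_{U_+}(f^n(z_0)) \leq \rho_D(z_0),
\]
a bound independent of $n$. For $n=n_0+m$ we have $f^n(z_0)=a_m>0$, so Proposition~\ref{prop:hypexamples} yields $\rho_{U_+}(a_m)=1/(2a_m)$; meanwhile the chain rule together with $|e^z|=e^{\re z}$ gives $|(f^n)'(z_0)|=C\cdot e^{a_0+a_1+\cdots+a_{m-1}}$, where $C>0$ absorbs the (possibly complex) iterates $f^k(z_0)$ with $k<n_0$. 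Using $a_m=e^{a_{m-1}}$ to cancel, Pick's bound reduces to $(C/2)\cdot e^{a_0+a_1+\cdots+a_{m-2}} \leq \rho_D(z_0)$, whose left-hand side tends to $+\infty$ as $m\to\infty$ since each $a_j>0$~-- the desired contradiction. The only real subtlety I anticipate is being careful to extract the stronger internal conclusion from the \emph{proof} of Theorem~\ref{thm:escapingdense} rather than its mere statement; once the orbit has been pinned to $(0,\infty)$, the super-exponential explosion of $|(f^n)'(z_0)|$ mechanically overwhelms the factor $1/(2a_m)$ left by the hyperbolic density of $U_+$.
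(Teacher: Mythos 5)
Your endgame computation is sound, but the step that feeds it has a genuine gap. You claim that the proof of Theorem~\ref{thm:escapingdense} uses only the hypothesis $f^n(D)\subset U=\C\setminus[0,\infty)$ for all $n$, so that its ``contrapositive'' yields some $z_0\in D$ and $n_0$ with $f^{n_0}(z_0)\in[0,\infty)$. That is not what that proof establishes: after deducing from $\eta_n\to 1$ that every \emph{finite} accumulation point of the orbit of $w_0$ lies in $[0,\infty)$, it must know that such a finite accumulation point exists, and for this it invokes exactly the hypothesis you discard, namely $w_0\notin I(f)$ (which came from $f^n(D)\cap I(f)=\emptyset$). Under the weaker assumption that all forward images of $D$ avoid $[0,\infty)$, the argument breaks down precisely in the case $D\subset I(f)$, and at this stage of the paper that case is explicitly still open (``the proof leaves open the possibility that the escaping set has nonempty interior''); Theorem~\ref{thm:negativereal} is one of the ingredients later used to rule it out. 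So the strengthened statement you want to extract -- that iterated preimages of $[0,\infty)$ meet every disc -- is true, but it is not available from the proof of Theorem~\ref{thm:escapingdense}, and relying on it here is essentially circular.

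The repair is to give up on forcing the orbit onto the positive real axis and to run your Pick-versus-derivative estimate along an \emph{arbitrary} escaping orbit, which Theorem~\ref{thm:escapingdense} does provide: choose $w_0\in D\cap I(f)$ and set $w_n\defeq f^n(w_0)$. Since $f^n\colon D\to U_+$ is holomorphic, Pick's theorem gives $|(f^n)'(w_0)|\,\rho_{U_+}(w_n)\leq\rho_D(w_0)$ for all $n$, while $|(f^n)'(w_0)|=\prod_{k=0}^{n-1}e^{\re w_k}=\prod_{k=1}^{n}|w_k|$ and $\rho_{U_+}(w_n)=\bigl(2|w_n|\cos(\Arg(w_n)/2)\bigr)^{-1}\geq 1/(2|w_n|)$, so the left-hand side is at least $\tfrac12\prod_{k=1}^{n-1}|w_k|\to\infty$ because $|w_k|=e^{\re w_{k-1}}\to\infty$ for an escaping point. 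This is exactly the paper's proof, phrased there as the estimate $\|\Deriv f(\zeta)\|_{\tilde U}^{\tilde U}\geq\sqrt2$ for $\re\zeta\geq 2$ in $\tilde U=\C\setminus(-\infty,0]$; your real-orbit computation is just the special case $\Arg w_n=0$ of it. Your reduction of ``infinitely many $n$'' to ``at least one $n$'' is fine and matches the paper's inductive step.
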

\begin{remark}
  This will also follow from the (stronger) results in the next section,
   but the proof we give here relies on the same ideas as that
   of Theorem \ref{thm:escapingdense}, which gives the argument a nice
   symmetry.
\end{remark}
 \begin{proof}
  Let $D\subset W$ be an open disc. We shall first prove that there
   is at least one $n$ such that $f^n(D)$ intersects the negative real axis.
   So assume, by contradiction, that
         \[ f^n(D) \subset \tilde{U} \defeq  \C\setminus (-\infty,0] \]
   for all $n\geq 0$. By Theorem \ref{thm:escapingdense}, there is a
   point $w_0\in D\cap I(f)$.

  We proceed similarly as in the proof of Theorem \ref{thm:escapingdense},
   but now use the
   the hyperbolic metric of $\tilde{U}$.
   The set $\tilde{U}\defeq \mathbb{C}\setminus(-\infty,0]$ is not
   backward invariant, hence $f$ is not locally expanding
   at every point of $\tilde{U}$. However,
   there \emph{is} expansion~-- even strong expansion~--
   at points with sufficiently large real parts:

 \begin{claim}
   If $\zeta\in f^{-1}(\tilde{U})$ and $\re \zeta \geq 2$, then
    \[ \|\Deriv f(\zeta)\|_{\tilde{U}}^{\tilde{U}} \geq \sqrt{2}. \]
 \end{claim}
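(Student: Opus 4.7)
The plan is to use the explicit formula for $\rho_{\tilde U}$ from Proposition~\ref{prop:hypexamples} and reduce the inequality to a one-variable estimate. Writing $\omega \defeq f(\zeta)$, so that $|f'(\zeta)|=|e^\zeta|=|\omega|$, and plugging the density formula
$\rho_{\C\setminus(-\infty,0]}(z) = 1/(2|z|\cos(\Arg(z)/2))$
into the definition of the hyperbolic derivative, the $|\omega|$'s cancel and I get
\[
  \|\Deriv f(\zeta)\|_{\tilde U}^{\tilde U}
   = |\omega|\cdot\frac{\rho_{\tilde U}(\omega)}{\rho_{\tilde U}(\zeta)}
   = \frac{|\zeta|\cos(\Arg(\zeta)/2)}{\cos(\Arg(\omega)/2)}.
\]
Since $\omega\in\tilde U$, we have $\Arg(\omega)/2\in(-\pi/2,\pi/2)$, so $\cos(\Arg(\omega)/2)\leq 1$; the point $\omega$ then drops out and it suffices to show
\[
  |\zeta|\cos(\Arg(\zeta)/2)\geq \sqrt{2}.
\]

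For this, I would square the left-hand side and apply the half-angle identity $\cos^2(\theta/2)=(1+\cos\theta)/2$, which turns the expression into a quantity involving only $|\zeta|$ and $\re\zeta$:
\[
  |\zeta|^2\cos^2(\Arg(\zeta)/2)
   = \frac{|\zeta|^2+|\zeta|\cdot\re\zeta}{2}.
\]
Now the hypothesis $\re\zeta\geq 2$ forces $|\zeta|\geq 2$, and both terms in the numerator are at least $4$, so the right-hand side is at least $4$. Taking square roots gives $|\zeta|\cos(\Arg(\zeta)/2)\geq 2\geq \sqrt{2}$, as required.

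There is no real obstacle here: once the formula from Proposition~\ref{prop:hypexamples} is in place, the proof is a short computation, and the hypothesis $\re\zeta\geq 2$ is used in exactly one place to make the final numerical bound work. In fact the argument yields the slightly stronger bound $\|\Deriv f(\zeta)\|_{\tilde U}^{\tilde U}\geq 2$, but only $\sqrt{2}$ is needed in the sequel.
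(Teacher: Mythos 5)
Your proof is correct and follows essentially the same route as the paper: compute $\|\Deriv f(\zeta)\|_{\tilde U}^{\tilde U}$ from the explicit density formula of Proposition~\ref{prop:hypexamples}, drop the denominator via $\bigl|\cos\bigl(\tfrac12\Arg(e^{\zeta})\bigr)\bigr|\leq 1$, and bound the remaining quantity $|\zeta|\cos(\Arg(\zeta)/2)$ using $\re\zeta\geq 2$. The only (harmless) difference is in the last step: the paper notes $|\Arg\zeta|<\pi/2$ so $\cos(\Arg(\zeta)/2)\geq \sqrt{2}/2$ and concludes $\geq\sqrt{2}$, whereas your half-angle computation gives the marginally stronger bound $\geq 2$.
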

 \begin{proof}
   This follows by a similar calculation as in the proof of
    Lemma \ref{lem:hyper}: if we write
     $\zeta=re^{i\theta}$ with $\theta\in(-\pi/2,\pi/2)$,
    then $f(\zeta)=e^{re^{i\theta}}=e^{r(\cos\theta+i\sin\theta)}$, and
    \begin{align*}
\|\Deriv f(\zeta)\|_{\tilde{U}}^{\tilde{U}}
     &=|f'(\zeta)|\cdot\frac{2|\zeta|\cdot|\cos\left(\frac{1}{2}\Arg(\zeta)\right)|}{%%%
          2|e^{\zeta}|\cdot\bigl|\cos\bigl(\frac{1}{2}\Arg\bigl(e^{\zeta}\bigr)\bigr)\bigr|}  \\
   &=
          \frac{r\cdot\cos(\theta/2)}{\bigl|\cos\bigl(\frac{1}{2}\Arg\bigl(e^{\zeta}\bigr)\bigr)\bigr|}
          \geq r\cdot\cos(\theta/2) \geq r\cdot \frac{\sqrt{2}}{2} \geq
           \sqrt{2}. \qedhere
   \end{align*}
 \end{proof}

 Now the proof proceeds along the same lines as before. Set
  $w_n \defeq  f^n(w_0)$ and $D_n \defeq  f^n(D)$ for $n\geq 0$. Since
   $f^n\colon D\to \tilde{U}$ is a holomorphic map, we again have
     \[ \delta_n \defeq 
             \|\Deriv f^n(w_0)\|^{\tilde U}_D \leq 1 \]
   for all $n$ by Pick's Theorem. The numbers
    $\eta_n\defeq \|\Deriv f(w_n)\|_{\tilde U}^{\tilde U}=\frac{\delta_{n+1}}{\delta_n}$ need not be
   bounded below by $1$. However, since $w_0\in I(f)$, there is
   $n_0$ such that $\re w_n \geq 2$ for $n\geq n_0$, and hence
  \[ \delta_n = \delta_{n_0}\cdot \|\Deriv f^{n-n_0}(w_{n_0})\|_{\tilde{U}}^{\tilde{U}} \geq
            \delta_{n_0}\cdot 2^{(n-n_0)/2}\to \infty. \]
  This is a contradiction.

  So \lrg{$f^n(W)\cap (-\infty,0]\neq\emptyset$} for some
   $n$. To see that there are infinitely many such $n$, set $k_0=n$,
   and apply
   the result to $f^{k_0+1}(W)$ to find some $k_1>k_0$ with
   $f^{k_1}(W)\cap (-\infty,0]\neq \emptyset$. Proceeding inductively, we find
   an infinite sequence $(k_j)$ with the desired property.
 \end{proof}

 \begin{cor}[Sensitive dependence] \label{cor:sensitive}
    The exponential map $f\colon \C\to\C$ has sensitive dependence on initial
    conditions with respect to spherical distance.
 \end{cor}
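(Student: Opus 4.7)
The plan is to combine the two main results already established in this section: the density of the escaping set (Theorem~\ref{thm:escapingdense}) and the fact that for every nonempty open $W$ the iterates $f^n(W)$ meet $(-\infty,0]$ for infinitely many $n$ (Theorem~\ref{thm:negativereal}). The first guarantees that any open set contains points whose forward orbits eventually leave every bounded region, while the second produces, inside the same open set, points whose orbits are trapped in a bounded region at infinitely many moments in time.

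The key observation for fixing the constants is that $\exp$ maps $(-\infty, 0]$ into the interval $(0, 1]$, so whenever an iterate of a point lands on the nonpositive real axis, the next iterate has modulus at most $1$. I would therefore take $R \defeq 1$ and $\delta \defeq 1$. Given an arbitrary nonempty open set $U \subset \C$, first use Theorem~\ref{thm:escapingdense} to select $w \in U \cap I(f)$, and choose $N$ with $|f^k(w)| \geq 2$ for all $k \geq N$. Next, by Theorem~\ref{thm:negativereal}, there exists some $m \geq N - 1$ with $f^m(U) \cap (-\infty, 0] \neq \emptyset$; pick $z \in U$ with $f^m(z) \in (-\infty, 0]$, so that $f^{m+1}(z) = \exp(f^m(z)) \in (0, 1]$. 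Setting $n \defeq m+1 \geq N$, one obtains $|f^n(z)| \leq 1 = R$ together with
\[
  |f^n(w) - f^n(z)| \;\geq\; |f^n(w)| - |f^n(z)| \;\geq\; 2 - 1 \;=\; \delta,
\]
which is exactly the property required in Definition~\ref{defn:sphericalsensitive}.

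Since both ingredients are already in hand, this argument is essentially a packaging step and I expect no substantial obstacle: the hard work has been carried out in the proofs of Theorems~\ref{thm:escapingdense} and~\ref{thm:negativereal}. The only point that needs a moment's thought is ensuring that the \emph{same} index $n$ witnesses both the bounded iterate of $z$ and the large iterate of $w$, which is why I would fix the escaping point $w$ first and only then invoke Theorem~\ref{thm:negativereal}, exploiting the freedom to choose $m$ arbitrarily large.
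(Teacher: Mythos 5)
Your proposal is correct and matches the paper's own proof essentially verbatim: the same constants $R=\delta=1$, the same combination of Theorem~\ref{thm:escapingdense} (to get an escaping point $w$ with $|f^k(w)|\geq 2$ eventually) and Theorem~\ref{thm:negativereal} (using the ``infinitely many $n$'' clause to synchronize the index), and the same final triangle-inequality estimate. Nothing further is needed.
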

 \begin{proof}
 We shall prove that $f$ satisfies Definition \ref{defn:sphericalsensitive} with $R=1$ and $\delta=1$. Indeed, let 
   $U\subset\C$ be open. Then, by Theorem \ref{thm:escapingdense}, $U$ contains an escaping point $w$, and 
    there
    is some $n_0$ such that $|f^{n}(w)|\geq 2$ for all $n\geq n_0$.
   By Theorem \ref{thm:negativereal}, there is $n_1>n_0$ and some point
   $z\in D$ such that $f^{n_1}(z)\in (-\infty,0]$.
   Hence, for $n \defeq  n_1 + 1$, we have
   $|f^n(z)|\leq 1=R$ and $|f^n(w)-f^n(z)|\geq |f^n(w)|-|f^n(z)|\geq 1=\delta$, as desired.
 \end{proof}

\section{Transitivity and dense orbits}\label{sec:transitivity}

 Recall that \emph{topological transitivity}, one of the properties required in the
  definition of chaos, means that we can move between any two
  nonempty open subsets of the complex plane by means of the iterates of $f$. 
  The goal of this section is to establish transitivity, and deduce 
  that there are also points with dense orbits. 

 \begin{thm}[Topological transitivity] \label{thm:trans}
    If $U$, $V$ are nonempty and open, then there exists $n\geq0$ such that $f^n(U)\bigcap V\neq\emptyset$.
 \end{thm}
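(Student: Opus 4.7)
The plan is to pilot a small disc $D\subset U$ onto the negative real axis via Theorem~\ref{thm:negativereal}, and then exploit the rapid derivative growth along the positive real escaping orbit to inflate its image until it swallows $V$.

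\textbf{Steps.} Theorem~\ref{thm:negativereal} applied to a disc $D\subset U$ yields $n_0$ and $t>0$ with $-t\in f^{n_0}(D)$; by openness, $D_\delta(-t)\subset f^{n_0}(D)$ for some $\delta>0$. Consider now the real forward orbit $a_k\defeq f^k(-t)$: $a_1=e^{-t}\in(0,1)$, $a_k\to+\infty$ double-exponentially, and
\[ |(f^k)'(-t)|=\prod_{j=0}^{k-1}e^{a_j}=a_1\cdot a_2\cdots a_k \]
grows correspondingly fast. The map $\exp$ is injective on any horizontal strip of height less than $2\pi$; as long as the iterates $f^k(D_\delta(-t))$ lie in such a strip (which they do near the real axis while their imaginary extent stays controlled), each $f^k|_{D_\delta(-t)}$ is a conformal isomorphism. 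Applying Pick's theorem~\ref{thm:pick} to this isomorphism, together with the fact --- visible from the explicit formulas of Proposition~\ref{prop:hypexamples} --- that the hyperbolic density $\rho_G(z)$ in a simply-connected domain $G$ is comparable to $1/\dist(z,\partial G)$, yields a Koebe-type inflation estimate: the image $f^k(D_{\delta/2}(-t))$ contains a Euclidean disc of radius at least $c\,\delta\,|(f^k)'(-t)|$ around $a_k$, for some universal $c>0$. Taking $k$ so that this radius first exceeds $\pi$, one deduces $D_\pi(a_k)\subset f^{n_0+k}(D)$, with $a_k$ a very large positive real; univalence persists at that step because the previous image still fits comfortably in a horizontal strip of height $<2\pi$.

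\textbf{Finishing.} The disc $D_\pi(a_k)$ contains an open vertical diameter of length $2\pi$, so one application of $f$ wraps it fully around the origin: $f^{n_0+k+1}(D)$ contains the circle $\{|w|=e^{a_k}\}$ minus a single point, and, by openness, an annular neighbourhood of it. A further iteration of $f$ stretches this annulus into a far larger one in $f^{n_0+k+2}(D)$, and so on; any sufficiently large annulus already contains $V$, so for some $n$ we have $f^n(U)\cap V\neq\emptyset$.

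\textbf{Main obstacle.} The delicate part is the quantitative inflation estimate in the middle step: one needs a lower bound on the Euclidean radius of a disc contained in $f^k(D_{\delta/2}(-t))$ without invoking Koebe's $1/4$-theorem by name. This is precisely the service performed by Pick's theorem together with the boundary-distance characterisation of the hyperbolic density, both of which live comfortably within the elementary framework of Section~\ref{sec:hyperbolic}.
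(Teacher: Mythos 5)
Your middle step does not hold together as written. The inflation estimate requires $f^k$ to be univalent on $D_{\delta}(-t)$ (without univalence, a large derivative at the centre does \emph{not} force the image to contain a large disc about the centre), and the justification you give for univalence at the stopping time --- ``the previous image still fits comfortably in a horizontal strip of height $<2\pi$'' because $k$ is the first index at which the radius exceeds $\pi$ --- conflates two different quantities. Your stopping rule controls a \emph{lower} bound (the inradius of the image about $a_k$), whereas injectivity of the next application of $\exp$ requires an \emph{upper} bound on the imaginary extent of $f^{k-1}(D_\delta(-t))$. No such upper bound is established; obtaining one is a distortion estimate that itself presupposes univalence on a larger disc, so the argument is circular as it stands. (One can break the circularity with a stopping time based on the \emph{diameter} of the images, but then at the stopping time you only know the diameter exceeds $\pi$, not that the image contains $D_\pi(a_k)$; and since the one-step expansion factor $e^{a_{k-1}}=a_k$ is unbounded, you cannot pass from control at time $k-1$ to the statement you want at time $k$.) In addition, the inequality you need, $\rho_G(w)\geq c/\dist(w,\partial G)$ for the \emph{arbitrary} simply connected image domain $G=f^k(D_{\delta/2}(-t))$, is exactly the Koebe quarter theorem in hyperbolic language; the paper verifies this comparability only for its explicit model domains (Proposition~\ref{prop:hypexamples}) and otherwise cites it, so this step is not available ``within the elementary framework of Section~\ref{sec:hyperbolic}'' as you claim, even though the fact itself is true.

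The endgame is also too loose. Even granting $D_\pi(a_k)\subset f^{n_0+k}(D)$, the set $f(D_\pi(a_k))$ contains the circle of radius $e^{a_k}$ \emph{minus a point} together with a neighbourhood of unknown, possibly tiny, thickness --- not a round annulus --- and the assertion that one more iterate produces a ``far larger'' annulus that meets $V$ (note also that a large annulus contains a prescribed point of $V\setminus\{0\}$ only if its \emph{inner} radius is small) is precisely the quantitative covering step that the paper carries out in Observation~\ref{obs:montel} by inscribing a rectangle of height $2\pi$ in a genuinely thick annulus and computing with Pythagoras. The paper sidesteps both of your problematic points by running the construction backwards: it pulls the disc $D_{2\pi}(z_n)$ back along an escaping orbit chosen \emph{inside} $U$ using explicit logarithm branches with $|L_n'(z)|=1/|z|$ (Proposition~\ref{prop:trans}), so no Koebe-type distortion control is ever needed, and then applies Observation~\ref{obs:montel} to the full disc $D_{2\pi}(z_n)$. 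Your forward-expansion route starting from Theorem~\ref{thm:negativereal} can likely be repaired, but only by importing distortion machinery and redoing that covering computation, which is exactly what is missing here.
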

 We shall use the fact\lrg{, established in the previous section,} that the escaping set is dense in the plane. 
  The key point \lrg{is} that 
   $f$ is strongly expanding along the orbit of any escaping point $z_0$. 
   Hence, for sufficiently large $n$, 
    $f^n$ maps a small disc around $z_0$ to a set that contains
   a disc of radius $2\pi$ centred at
    $f^n(z_0)$. \lrg{By elementary mapping properties of the exponential map, the latter disc is spread, after two more applications of $f$, 
   over a large part of the   
   complex plane}. This establishes topological transitivity. 

 We now provide the details of this argument. Let us begin with a simple observation.
  \begin{obs}[Expansion along escaping orbits]\label{obs:escapingexpansion}
   Let $z_0\in I(f)$, and set $z_n \defeq  f^n(z_0)$ for $n\geq 1$. 
    Then $\re z_n\to\infty$ and 
   $|(f^n)'(z_0)|\to\infty$ as $n\to\infty$.
  \end{obs}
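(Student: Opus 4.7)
The plan is to exploit the special identity $f'(z) = e^z = f(z)$ for the exponential map, which makes the derivative of an iterate telescope into a product of orbit values; combined with the hypothesis that $|z_n|\to\infty$, both conclusions then fall out almost immediately.

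First, I would observe that, by definition of $I(f)$, we have $|z_n|\to\infty$. Since $z_{n+1} = e^{z_n}$, taking absolute values yields $|z_{n+1}| = e^{\re z_n}$, and hence
\[ \re z_n = \log|z_{n+1}| \longrightarrow \infty \]
as $n\to\infty$. This disposes of the first assertion. (Notice that this argument uses nothing beyond the formula~\eqref{eqn:expdefn} and the assumption $z_0\in I(f)$; in particular, we do not need any of the hyperbolic geometry from the previous sections.)

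For the derivative, I would apply the chain rule. Since $f'(w) = e^w$, we have $f'(z_k) = e^{z_k} = z_{k+1}$, and therefore
\[ (f^n)'(z_0) = \prod_{k=0}^{n-1} f'(z_k) = \prod_{k=0}^{n-1} z_{k+1} = \prod_{k=1}^{n} z_k. \]
Taking moduli gives $|(f^n)'(z_0)| = \prod_{k=1}^{n} |z_k|$. Because $|z_k|\to\infty$, there exists $N$ such that $|z_k|\geq 2$ for all $k\geq N$, so the tail of the product grows at least like $2^{n-N+1}$. Consequently $|(f^n)'(z_0)|\to\infty$, completing the proof.

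There is essentially no obstacle here: the only step requiring any thought is spotting the algebraic simplification $f'(z_k)=z_{k+1}$, which converts the chain-rule product into a telescoping product of orbit points. Once this is noticed, both conclusions are automatic consequences of $|z_n|\to\infty$.
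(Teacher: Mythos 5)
Your proof is correct and follows essentially the same route as the paper: both arguments derive $\re z_n = \log|z_{n+1}|\to\infty$ directly from the definition of $I(f)$, and both use the chain rule together with $|f'(z_k)|=|z_{k+1}|\geq 2$ for large $k$ to force the derivative product to infinity. The only point worth making explicit is that the finite head of your product, $\prod_{k=1}^{N-1}|z_k|$, is a nonzero constant (automatic here, since each $z_k=e^{z_{k-1}}\neq 0$), which is exactly the remark the paper makes via $|(f^N)'(z_0)|\neq 0$.
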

 \begin{proof}
   Since $z_{n+1}=e^{z_n}$ for all $n\geq 0$, we have
      \[ \re z_n = \log |z_{n+1}| \to \infty \]
    by definition of $I(f)$. 

   Furthermore, $|f'(z_n)| = |f(z_n)|=|z_{n+1}|\to \infty$, and hence there is 
   $N\in\N$ such that $|f'(z_n)|\geq2$ for all $n\geq N$. For $m> N$, we can use the chain rule to compute the derivative of
    $f^m = f \circ f\circ \dots \circ f \circ f^N$: 
   \[ |(f^m)'(z_0)| = \bigl|\bigl(f^N\bigr)'(z_0)\bigr|\cdot \prod_{n=N}^{m-1}|f'(z_n)| \geq
                                2^{m-N} \cdot \bigl|\bigl(f^N\bigr)'(z_0)\bigr| \to \infty \]
    as $m\to\infty$. Here we used the fact that $|(f^N)'(z_0)|\neq 0$, since $f'(z)\neq 0$ for
    all $z\in\C$.
 \end{proof}

We next prove the above-mentioned fact concerning the iterated 
  images of small discs around escaping points.

 \begin{prop}[Small discs blow up] \label{prop:trans}
    Let $z_0\in I(f)$. For $n\geq 1$, set $z_n\defeq f^n(z_0)$ and consider
    the disc
    $D_n$ of radius $2\pi$ centred at $z_n$; i.e.\ $D_n\defeq D_{2\pi}(z_n)$.
    Then there are $n_0\in\N$ and a sequence $(\phi_n)_{n\geq n_0}$ of holomorphic maps 
       $\phi_n\colon  D_n\rightarrow\mathbb{C}$ with the following properties:
    \begin{enumerate}[{\normalfont(a)}]
        \item $\phi_n(z_n)=z_0$, \label{item:correctbranch}
        \item $f^n(\phi_n(z))=z$ for all $z\in D_n$, \label{item:inversebranch}
        \item $\sup_{z\in D_n}|\phi'_n(z)|\to 0$ as $n\to\infty$, and \label{item:transcontraction}
       \item $\diam(\phi(D_n))\to 0$ as $n\to\infty$. \label{item:smalldiameter}
    \end{enumerate}
   (That is, for large $n$ there is
    a branch  $\phi_n$ of $f^{-n}$ that takes $z_n$ back to $z_0$ and 
    is uniformly strongly contracting.)
 \end{prop}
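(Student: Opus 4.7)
The plan is to construct $\phi_n$ as an iterated composition $L_1 \circ L_2 \circ \cdots \circ L_n$ of $n$ logarithm branches, each pulling one step backward along the orbit, and to control its derivative via the identity $|\log'(w)| = 1/|w|$ combined with the rapid escape $|z_n| \to \infty$. By Observation \ref{obs:escapingexpansion}, I may fix an integer $N$ such that $|z_k| \geq 10$ for all $k \geq N$; in particular $0 \notin D_n$ whenever $n \geq N$. For such $n$ I construct, by downward induction on $k = n, n-1, \ldots, 1$, simply-connected domains $V_k \subset \C \setminus \{0\}$ containing $z_k$ together with branches $L_k$ of $\log$ on $V_k$ satisfying $L_k(z_k) = z_{k-1}$: start with $V_n \defeq D_n$; the branch $L_k$ exists because $V_k$ is simply-connected, omits $0$, and $e^{z_{k-1}} = z_k$; and set $V_{k-1} \defeq L_k(V_k)$. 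Defining $\phi_n \defeq L_1 \circ \cdots \circ L_n$ then yields (\ref{item:correctbranch}) and (\ref{item:inversebranch}) automatically.

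The heart of the argument is the inductive bound $V_k \subset D_{r_k}(z_k)$, where the radii satisfy $r_n = 2\pi$ and $r_{k-1} \leq 2r_k/(|z_k| - r_k)$. This recursion follows because $D_{r_k}(z_k)$ is convex and $\sup_{D_{r_k}(z_k)} |L_k'| \leq 1/(|z_k| - r_k)$, so the mean-value inequality bounds the diameter of $L_k(V_k) \subset L_k(D_{r_k}(z_k))$ by $2r_k/(|z_k| - r_k)$. For $k \in [N, n]$ each step then contracts by a factor of at least $9$ once $r_k \leq 1$, so $r_k \leq C/|z_n|$ throughout this range. For the finitely many initial indices $k < N$, the values $|z_k|$ are positive numbers bounded below by some $\varepsilon > 0$ depending only on $z_0$, so the recursion can inflate $r_k$ by at most a bounded factor per step; hence $r_1 \leq C'/|z_n| \to 0$. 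For $n$ sufficiently large this also gives $r_k < |z_k|/2$ uniformly in $k$, legitimising the whole construction.

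With the radius estimates in hand, (\ref{item:transcontraction}) and (\ref{item:smalldiameter}) follow from the chain rule. Writing $\phi_n^{(k)} \defeq L_{k+1} \circ \cdots \circ L_n$, one has $|\phi_n'(w)| = \prod_{k=1}^n |L_k'(\phi_n^{(k)}(w))| \leq \prod_{k=1}^n 1/(|z_k| - r_k)$. The first $N-1$ factors are bounded by a constant depending only on the orbit of $z_0$; each of the remaining factors is at most $1/9$, so the whole product tends to $0$ as $n \to \infty$, giving (\ref{item:transcontraction}). Property (\ref{item:smalldiameter}) is then immediate since $\diam \phi_n(D_n) \leq 4\pi \cdot \sup |\phi_n'| \to 0$. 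The main obstacle is really the radius bookkeeping: one must verify simultaneously that the branches of logarithm are well-defined at every step (i.e.\ $r_k < |z_k|$) and that the derivative product collapses, handling separately the indices $k \geq N$, where $|z_k|$ is large and the geometric contraction is strong, from the finitely many initial indices, where $|z_k|$ may be small but are in any case fixed independently of $n$.
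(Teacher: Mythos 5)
Your proof is correct, and its core is the same as the paper's: pull $D_n$ back along the escaping orbit by composing branches $L_1\circ\cdots\circ L_n$ of the logarithm, and use $|L_k'(w)|=1/|w|$ together with $|z_k|\to\infty$ to force strong contraction. The difference lies in the bookkeeping. The paper avoids your shrinking-radius recursion entirely: once $|z_k|\ge 2\pi+2$ for all $k\ge 1$, one has $|L_k'|\le 1/2$ on the whole disc $D_k$, so the mean value inequality gives $L_k(D_k)\subset D_{\pi}(z_{k-1})\subset D_{k-1}$; thus each branch maps the full $2\pi$-disc into the previous one, the composition is automatically defined on all of $D_n$, and $|\phi_n'|\le 2^{-n}$ with no radii $r_k$ to track. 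For the finitely many initial orbit points of possibly small modulus, instead of continuing the pullback with your inflation estimates, the paper fixes once and for all a single branch $\pi$ of $f^{-n_1}$ near $z_{n_1}$ (inverse function theorem), waits until $\psi_n(D_n)$ lies in a compact disc $K$ inside its domain, and sets $\phi_n=\pi\circ\psi_n$, bounding $|\pi'|$ on $K$ by compactness. Your route does work, and you correctly flag the one delicate point (verifying $r_k<|z_k|$ at every step simultaneously with the contraction, by a finite downward induction whose constants depend only on the orbit of $z_0$, so that ``$n$ sufficiently large'' is uniform). Two harmless numerical slips: with your recursion $r_{k-1}\le 2r_k/(|z_k|-r_k)$, the per-step contraction factor when $r_k\le 1$ and $|z_k|\ge 10$ is $9/2$ rather than $9$, and the first couple of tail factors $1/(|z_k|-r_k)$ are bounded only by $1/(10-2\pi)$, not $1/9$; each is still below $1/3$, so the product tends to $0$ geometrically and the conclusion is unaffected.
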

 \begin{proof}
  Observe that \ref{item:transcontraction} implies \ref{item:smalldiameter}. Indeed, for $z\in D_n$,
   \begin{equation}\label{eqn:meanvalue}
     |\phi_n(z) - \phi_n(z_n)| \leq |z-z_0|\cdot \sup_{\zeta \in D_n}|\phi_n'(\zeta)| \leq
          2\pi \sup_{\zeta \in D_n}|\phi_n'(\zeta)|
   \end{equation}
   by the
   mean value inequality. Hence by \ref{item:transcontraction},
    \[ \diam(\phi(D_n)) \leq 
       4\pi \sup_{\zeta \in D_n} |\phi_n'(\zeta)| \to 0 \qquad \text{as $n\to\infty$.}\]

 To prove the proposition, first assume additionally that 
    $|z_n|\geq2\pi+2$ for all $n\geq1$.
    In this case, none of the discs $D_n$ contain the origin.
    For each $n\geq 1$, \lrg{let}
    $L_n\colon D_n\rightarrow\mathbb{C}$ be the branch of the logarithm
    with $L_n(z_n)=L_n(e^{z_{n-1}})=z_{n-1}$.
    What can we say about the range of this map $L_n$? 
    Since $|z_n|\geq2\pi+2$ for all $n\geq1$, we have $|z|\geq 2$ for all $z\in D_n$, and hence
           \[|L_n'(z)|=1/|z|\leq 1/2. \]
    Again using the mean value inequality, we see that     \begin{align}
        |L_n(z)-z_{n-1}|=|L_n(z)-L(z_n)|\leq\sup_{\zeta\in D_n}|L'(\zeta)|\cdot \pi
    \end{align}
  for each $n$ and
     all $z\in D_n$. 
     In particular, and importantly, $L_n(D_n)\subset D_{n-1}$.

 It follows by induction that the composition 
   $\phi_n \defeq  L_1 \circ L_2 \circ \dots \circ L_n$ is defined on $D_n$\lrg{, 
    with $|\phi_n'(z)|\leq 2^{-n}$ for $z\in D_n$.}
  Hence $\phi_n$ satisfies~\ref{item:transcontraction}, and 
   \lrg{\ref{item:correctbranch} and~\ref{item:inversebranch} hold} by construction.
   This  completes \lrg{the proof} when $|z_n|\geq 2\pi +2$ for all $n\geq 1$.

 If this is not the case, then~-- since $z_0$ is an escaping point~-- there still exists 
    $n_1$ such that 
    $|z_n|\geq2\pi+2$ for all $n> n_1$. We can thus apply the preceding case 
    to the point $z_N$. This means that, for every $n>n_1$, there is a holomorphic map
     $\psi_n\colon D_n\to D_{n_1}$ such that 
        \begin{enumerate}[(a)]
            \item $\psi_n(z_n)=z_m$,
            \item $f^{n-n_1}(\psi_n(z))=z$ for all $z\in D_n$,
            \item $\sup_{z\in D_n}|(\psi_n)'(z)|\to0$ as $n\to\infty$, and
            \item $\diam(\psi_n(D_n))\to 0$ as $n\to\infty$.
        \end{enumerate}
 By the inverse function theorem, there exists a neighbourhood
  $U$ of $z_{n_1}$ and a branch $\pi\colon U\to\C$ of $f^{-n_1}$ mapping $z_{n_1}$ 
  to $z_0$. (This also follows from repeated applications of \lrg{suitable branches of the logarithm.} Observe that $z_n = f(z_{n-1})\neq 0$ for $n\geq 1$.)

   Now let $K$ be a small closed disc around $z_{n_1}$ with $K\subset U$. By
    \ref{item:smalldiameter}, we have $\psi_n(D_n)\subset K$ for sufficiently
    large $n$~-- say, for $n\geq n_0$. Hence we can define
     \[ \phi_n \colon D_n \to \C; \quad \phi_n(z) \defeq  \pi(\psi_n(z)). \]
    This map satisfies~\ref{item:correctbranch} and~\ref{item:inversebranch} by
    definition, and
    \[ \sup_{z\in D_n}|\phi_n'(z)| = \sup_{z\in D_n} \bigl(|\psi_n'(z)|\cdot |\pi'(\psi_n(z))|\bigr) \leq
                \max_{z\in D_n} |\psi_n'(z)|\cdot \sup_{w\in K}|\pi'(w)| \to 0 \]
    by property \ref{item:transcontraction}) of $\psi_n$. (Since $K$ is compact, the continuous function $|\pi'|$
     assumes its maximum on $K$, which is independent of $n$.)
 \end{proof}

\lrg{As mentioned above, after two additional iterates 
  the discs in the preceding Proposition will cover
  a large portion of the plane, as long they lie far enough to the right:}
\begin{obs}[Images of large discs] \label{obs:montel}
  Let $K$ be any nonempty compact subset of the punctured plane $\C\setminus\{0\}$. Then there 
   is $\rho>0$ with the following property. Suppose that $D$ is a disc of radius $2\pi$,
   centred at a point \lrg{having} real part at least $\rho$. Then
     $K\subset f^2(D)$.
\end{obs}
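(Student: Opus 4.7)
The plan is to show first that $f(D)$ already contains a genuine round annulus about the origin, and then that a second application of $f$ spreads this annulus over any prescribed compact subset of $\C^*$.

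The key observation for the first step is that the disc $D = D_{2\pi}(z_0)$ contains the inscribed rectangle
\[ R \defeq \{\, z_0 + a + ib : |a| \leq \pi\sqrt{3},\ |b| \leq \pi \,\}, \]
since $(\pi\sqrt{3})^2 + \pi^2 = (2\pi)^2$. Setting $\rho \defeq \re z_0$ and using the factorisation $f(z_0 + a + ib) = e^{z_0}\, e^{a}\, e^{ib}$, we see that $f(R)$ is precisely the closed annulus
\[ A \defeq \{\, w \in \C : e^{\rho - \pi\sqrt{3}} \leq |w| \leq e^{\rho + \pi\sqrt{3}} \,\}, \]
because $e^{ib}$ sweeps out the full unit circle as $b$ ranges over $[-\pi,\pi]$, while $e^{a}$ covers the appropriate range of radii. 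Thus $f(D) \supset A$.

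For the second step, I would verify that $K \subset f(A)$ provided $\rho$ is sufficiently large (depending only on $K$). Since $K$ is compact in $\C^*$, the quantities $\log|w|$ and $\Arg w$ are uniformly bounded for $w \in K$. Each preimage of $w$ under $\exp$ has the form $\zeta_k = \log|w| + i(\Arg w + 2\pi k)$ for some $k \in \Z$, and I would choose $k = k(w)$ so that $2\pi k + \Arg w$ lies within $\pi$ of $e^\rho$. Then $|\zeta_k| = e^\rho + O(1)$ uniformly in $w \in K$, so $\zeta_k \in A$ whenever $\rho$ is large enough that this $O(1)$ error is absorbed into the radial interval $[e^{\rho - \pi\sqrt{3}}, e^{\rho + \pi\sqrt{3}}]$; then $w = f(\zeta_k) \in f(A) \subset f^2(D)$.

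The only point requiring any care is the uniformity in the second step, but this is essentially free: the radial ratio of $A$ is the fixed constant $e^{2\pi\sqrt{3}}$, which dwarfs the bounded variation of $\log|w|$ and $\Arg w$ over the compact set $K$. The whole content of the proof is really the geometric remark that a disc of radius $2\pi$ is wide enough to contain a full vertical $2\pi$-period of $\exp$.
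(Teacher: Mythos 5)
Your proof is correct, and its second half takes a genuinely different route from the paper's. The first half is the same idea: both arguments inscribe in $D$ an axis-parallel box of full height $2\pi$ (the paper uses a square of side $2\pi$, you use the $2\pi\sqrt{3}\times 2\pi$ rectangle) and note that its image under $f$ is a \emph{round} annulus $A$ about the origin whose radii are $e^{\re z_0}$ times fixed constants, so that membership in $A$ is a condition on the modulus alone. For the second application of $f$ the paper works \emph{forwards}: it fits a rectangle of height $2\pi$ and half-width $\ell$ inside $A$, estimates $\ell>e^{a}$ by the Pythagorean theorem, and concludes that the image contains the huge annulus $\{e^{-\ell}\leq|w|\leq e^{\ell}\}\supset K$. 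You work \emph{backwards}: every $w\in K$ has a logarithm $\log|w|+i(\Arg w+2\pi k)$, and choosing $k$ so that the imaginary part lies within $\pi$ of $e^{\re z_0}$ puts its modulus at $e^{\re z_0}+O(1)$, uniformly over $K$, hence inside $A$ once $\re z_0$ is large compared with $\max_{w\in K}\bigl|\log|w|\bigr|$; then $w\in f(A)\subset f^2(D)$. Your pull-back step avoids the Pythagorean estimate and yields the threshold just as explicitly; the paper's push-forward step gives marginally more, namely an enormous round annulus inside $f^2(D)$ rather than only $K$. Two small points of presentation: you reuse the letter $\rho$ both for $\re z_0$ and for the threshold demanded in the statement, which invites confusion; and the corners of your inscribed rectangle lie exactly on the circle $|z-z_0|=2\pi$, so if $D$ is taken to be open you should shrink the horizontal half-width slightly (say to $\pi\sqrt{2}$), which costs nothing since only the fixed ratio of the radii of $A$ enters the estimate.
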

\begin{proof}
The disc $D$ contains
   a closed square of side-length $2\pi$, also centred at $\zeta$. Let $a=\re\zeta - \pi$ be the
    real part of the left vertical edge of $S$, then $a+2\pi=\re\zeta+\pi$ is the real part of the right vertical edge of $S$. 
  What is the image of $S$ under $f$? Looking back at Figure \ref{fig:exponential}, we see that
   it is precisely a closed round annulus $A$ around the origin, with inner radius
    $r_- = e^a$ and outer radius $r_+ = e^{a+2\pi}$. 

\begin{figure}%
\def\svgwidth{.48\textwidth}%
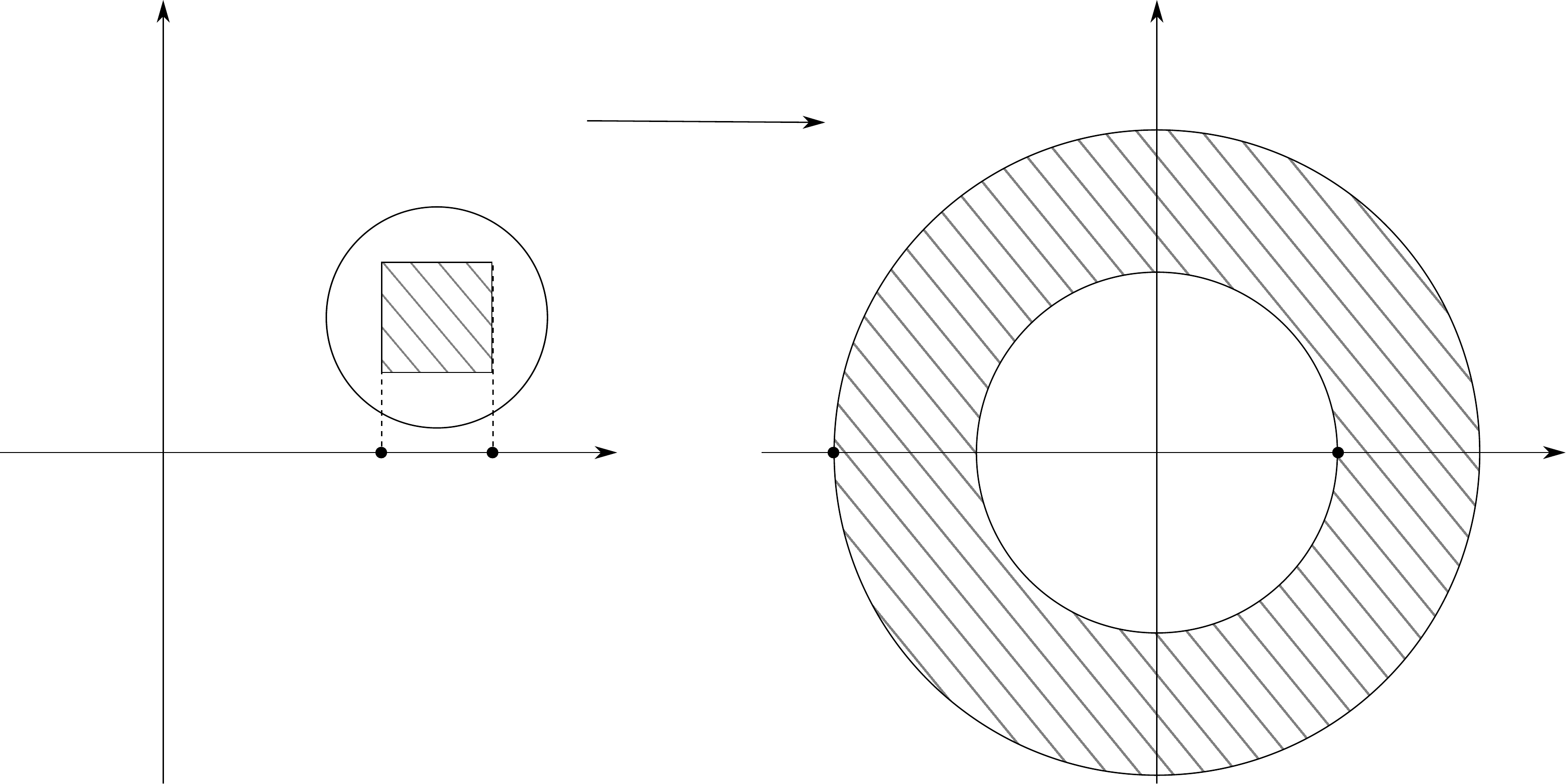%%
\hfill
\def\svgwidth{.48\textwidth}%
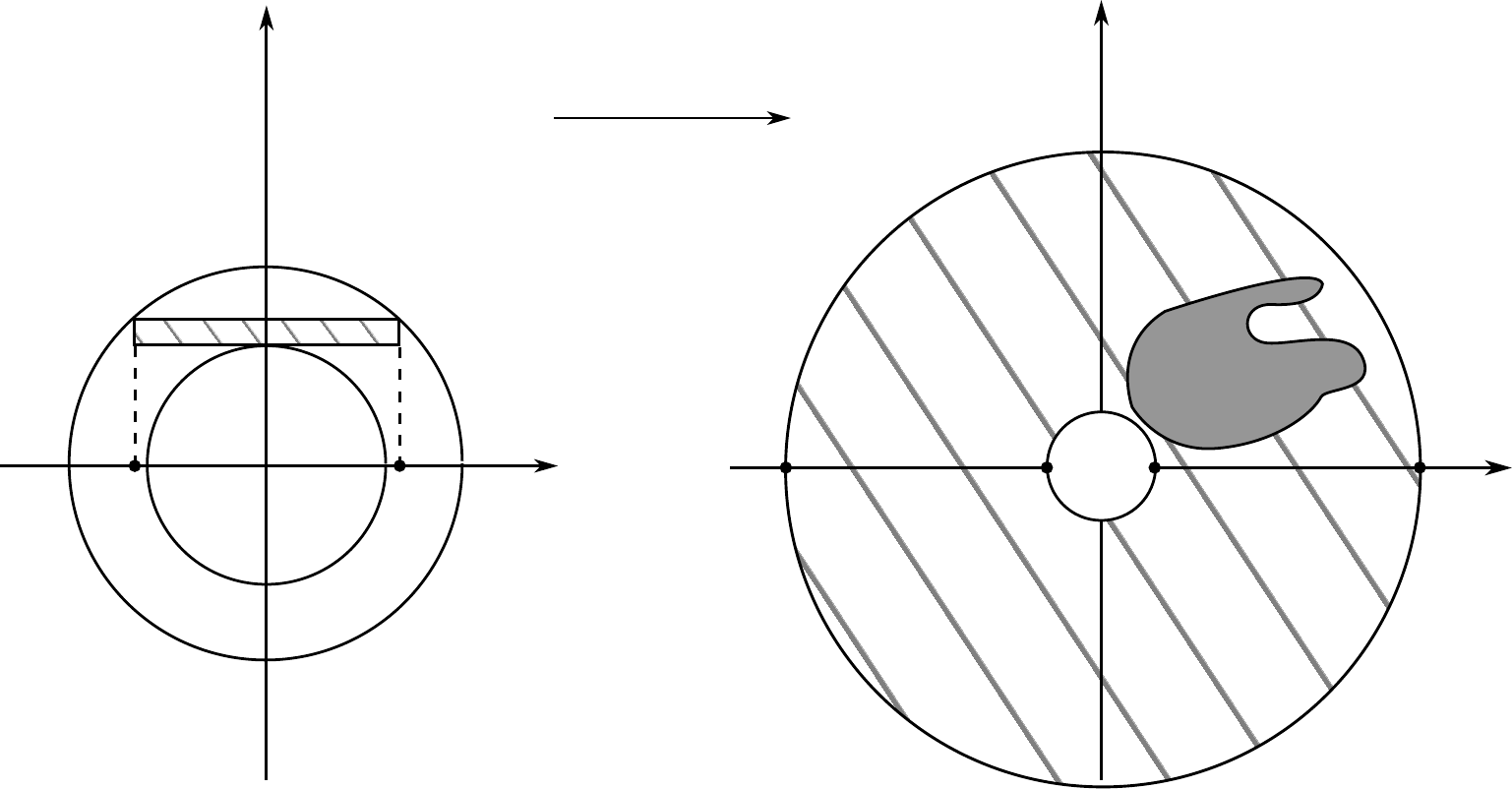%%
 \caption{\label{fig:transitivity}In the proof of Observation \ref{obs:montel}, the square
   $S$ is mapped to a large annulus $A$. The rectangle $R$ shown in $A$ maps to an annulus of huge outer
   radius and tiny inner radius, containing the compact set $K$.}
\end{figure}

  If $a$ is sufficiently large, then $A$ is a rather thick annulus with large inner radius. It
   follows that $A$ contains a long rectangular strip of height $2\pi$, and the image of this
   strip will cover most of the complex plane, including the compact set $K$. More
   precisely, suppose that $a\geq 0$; then $r_+-r_- \geq e^{2\pi} > 2\pi$. Hence we 
   can fit a maximal rectangle $R$ of height $2\pi$ into $A$, 
   symmetrically with respect to the imaginary axis and tangential to the inner boundary
   circle of $A$. (See Figure \ref{fig:transitivity}.)

  Let $\ell$ be the maximal real part of $R$ (i.e., $\ell$ is half the horizontal side-length of
   $R$). We can compute $\ell$ using the Pythagorean Theorem:
        \[
            \ell^2=r_+^2 - (r_-+2\pi)^2 > r_+^2 - 4r_-^2 =
                                                 (e^{4\pi} - 4)\cdot e^{2a} > e^{2a}, \]
     provided that $r_-\geq 2\pi$. Hence we see that $\ell > e^a$, and the image of
     $R$ includes all points of modulus between $e^{-\ell}$ and $e^{\ell}$. 

     In particular, we can set
      \[ \rho \defeq  3\pi + \max_{z\in K} \bigl|\log|z|\bigr|. \]
    If $\re \zeta \geq \rho$, then we have $a \geq 2\pi$, and 
     $\ell > e^a > a > \bigl|\log|z|\bigr|$. The claim follows.
\end{proof}

As a consequence, we obtain the following stronger version of Theorem \ref{thm:trans}.

\begin{cor}[Open sets spread everywhere] \label{cor:montel}
 Let $K\subset \C\setminus\{0\}$ be compact, and let $U\subset\C$ be open and nonempty.
   Then there is some $N\in\N$ such that
    $K\subset f^n(U)$ for all $n\geq N$.
\end{cor}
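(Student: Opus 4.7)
The plan is to combine an escaping point in $U$ with the contracting inverse branches from Proposition~\ref{prop:trans} and the spreading mechanism from Observation~\ref{obs:montel}. First, since $I(f)$ is dense in $\C$ by Theorem~\ref{thm:escapingdense}, I can fix some $z_0\in U\cap I(f)$ and set $z_n \defeq f^n(z_0)$. By Proposition~\ref{prop:trans}, for all sufficiently large $n$ there exist holomorphic branches $\phi_n\colon D_n\to\C$ of $f^{-n}$ with $\phi_n(z_n)=z_0$, $f^n\circ\phi_n=\operatorname{id}_{D_n}$, and $\diam(\phi_n(D_n))\to 0$.

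Next I would use the small diameter property to force $\phi_n(D_n)\subset U$. Since $z_0=\phi_n(z_n)\in U$ and $U$ is open, there is $\delta>0$ with $D_\delta(z_0)\subset U$; by property~\ref{item:smalldiameter} of Proposition~\ref{prop:trans}, for all $n$ beyond some $n_1$ the image $\phi_n(D_n)$ is contained in $D_\delta(z_0)\subset U$. Applying $f^n$ to this inclusion and using $f^n\circ\phi_n=\operatorname{id}_{D_n}$, I get
\[ D_n = f^n(\phi_n(D_n))\subset f^n(U) \qquad\text{for every } n\geq n_1. \]

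Now I invoke Observation~\ref{obs:escapingexpansion}: since $z_0\in I(f)$, we have $\re z_n\to\infty$, so there is $n_2$ with $\re z_n\geq \rho$ for all $n\geq n_2$, where $\rho=\rho(K)$ is the constant supplied by Observation~\ref{obs:montel} for the compact set $K\subset\C^*$. For $n\geq\max(n_1,n_2)$, Observation~\ref{obs:montel} then gives
\[ K \subset f^2(D_n) \subset f^2(f^n(U)) = f^{n+2}(U). \]
Setting $N \defeq \max(n_1,n_2)+2$, we conclude that $K\subset f^m(U)$ for every $m\geq N$, which is the desired statement.

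I do not expect any serious obstacle: all three ingredients have been put in place by the preceding results, and the argument is essentially a bookkeeping exercise. The one point requiring a moment's care is ensuring that the conclusion holds for \emph{all} sufficiently large $n$ (rather than merely infinitely many); this is automatic because both of the constraints on $n$ (the branch $\phi_n$ mapping into $U$, and $\re z_n$ being at least $\rho$) are satisfied from some index onward, as $\diam(\phi_n(D_n))\to 0$ and $\re z_n\to\infty$ along the escaping orbit.
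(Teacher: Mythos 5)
Your argument is correct and follows essentially the same route as the paper's proof: pick $z_0\in U\cap I(f)$, use Proposition~\ref{prop:trans} to get contracting inverse branches with $\phi_n(D_n)\subset U$ for large $n$, use the escaping orbit to ensure $\re z_n\geq\rho$, and apply Observation~\ref{obs:montel} to conclude $K\subset f^2(D_n)\subset f^{n+2}(U)$. The bookkeeping with $N=\max(n_1,n_2)+2$ matches the paper's choice $N=N_1+2$, so there is nothing to add.
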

\begin{remark}
  To deduce the original statement of Theorem \ref{thm:trans}, simply choose $K$ to consist
   of a single point in $V$. 
\end{remark}
\begin{proof}
   Since the escaping set is dense in the plane, there exists some $z_0\in I(f)\cap U$.  
    Set $z_n \defeq  f^n(z_0)$ for $n\geq 1$, and let $D_n \defeq  D_{2\pi}(z_n)$, $n_0$ and
    $\phi_n$ be as in Proposition \ref{prop:trans}. Also let $\rho$ be the number from
   Observation \ref{obs:montel} (for the same set $K$). Since $z_0$ is an escaping point,
   and by part~\eqref{item:smalldiameter} of
     Proposition \ref{prop:trans},
    we can choose $N_1\geq n_0$ such that
    $\re z_n \geq \rho$ and $\phi_n(D_n)\subset U$ for all $n\geq N_1$. 

   Let $n\geq N_1$. Then 
    \[ f^{n+2}(U) \supset f^{n+2}(\phi_{n}(D_n)) = f^2(f^n(\phi_n(D_n))) =
         f^2(D_n)\supset K \]
    by Observation \ref{obs:montel}. Hence the claim holds with $N \defeq  N_1+2$.
 \end{proof}

 The existence of dense orbits is closely related, and
  often equivalent, to that of topological transitivity. Indeed, we can
  deduce the former from Theorem \ref{thm:trans}, by using the 
    \emph{Baire category theorem} \cite[Exercise 16 in Chapter 2]{babyrudin}. 
  This theorem implies that any countable intersections of open and dense subsets of the 
   complex plane is itself dense and uncountable. 

 \begin{cor}[Dense orbits] \label{cor:denseorbit}
  The set $\mathcal{T}$ of all points $z_0\in\C$ with dense orbits under the exponential map 
    is uncountable and dense in $\C$. 
 \end{cor}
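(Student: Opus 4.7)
My plan is to deduce this from Theorem \ref{thm:trans} via the Baire category theorem, following the standard ``dense $G_\delta$'' argument.

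First I would fix a countable basis $\{V_k\}_{k\in\N}$ for the topology of $\C$ (for example, the collection of open discs with rational centres and rational radii). By definition of a basis, a point $z_0$ has dense orbit under $f$ if and only if, for every $k\in\N$, there exists $n\geq 0$ with $f^n(z_0)\in V_k$. Equivalently, writing
\[ U_k \defeq \bigcup_{n\geq 0} f^{-n}(V_k), \]
I have $\mathcal{T} = \bigcap_{k\in\N} U_k$. Each $U_k$ is open, since $f$ is continuous and $V_k$ is open. The key observation is that each $U_k$ is also \emph{dense} in $\C$: given any nonempty open $W\subset\C$, Theorem \ref{thm:trans} applied to the pair $(W,V_k)$ yields some $n\geq 0$ with $f^n(W)\cap V_k\neq\emptyset$, i.e.\ a point of $W$ lying in $f^{-n}(V_k)\subset U_k$.

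Next I would invoke the Baire category theorem: since $\C$ is a complete metric space, the countable intersection $\mathcal{T}=\bigcap_k U_k$ of open dense sets is itself dense. This already establishes the density statement and shows in particular that $\mathcal{T}\neq\emptyset$.

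Finally, for the uncountability of $\mathcal{T}$, I would use the standard fact that a dense $G_\delta$ subset of a complete metric space without isolated points is uncountable. The quick way to see this: if, for contradiction, $\mathcal{T}$ were countable, say $\mathcal{T}=\{w_1,w_2,\dots\}$, then the sets $U_k'\defeq U_k\setminus\{w_j\}$ would still form a countable family of open dense sets in $\C$ (removing a single point from an open dense set of $\C$ keeps it open and dense, as $\C$ has no isolated points). Applying Baire to $\{U_k\}\cup\{\C\setminus\{w_j\}\}_{j\in\N}$ gives a point in $\mathcal{T}\setminus\{w_j\colon j\in\N\}=\emptyset$, a contradiction. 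I don't expect any serious obstacle here; the only subtlety is remembering that Baire gives density but the uncountability requires this additional (very short) argument exploiting that $\C$ has no isolated points.
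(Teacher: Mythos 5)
Your proposal is correct and follows essentially the same route as the paper: density of each inverse orbit $\bigcup_{n\geq 0} f^{-n}(V_k)$ via Theorem \ref{thm:trans}, followed by the Baire category theorem applied to the countable family of rational discs. The only difference is that you spell out the short argument for uncountability, which the paper simply absorbs into its statement of the Baire theorem.
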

 \begin{proof}
  Let $D\subset\mathbb{C}$ be any non-empty open set. \lrg{By Theorem \ref{thm:trans},}
    the inverse orbit
   \[ \Orbit^-(D) \defeq  \bigcup_{n\geq 0} f^{-n}(D) \]
   is a dense subset of $\C$. Note that $\Orbit^-(D)$ is also open, as a union of open subsets. 

  Now consider the countable collection 
    \[ \mathcal{D} \defeq  \{ D_r(z) \colon  \re z, \im z, r\in \Q \} \] of 
   open discs with rational centres and radii. 
   Clearly $z_0\in \mathcal{T}$ if and only if the orbit of $z_0$ enters every element of 
   $\mathcal{D}$ at least once, i.e.
   \[ \mathcal{T} = \bigcap_{D\in\mathcal{D}} \Orbit^{-}(D). \]
  Hence $\mathcal{T}$ is indeed uncountable and dense, as a countable 
    intersection of open and dense subsets of $\C$. 
 \end{proof}

\section{Density of periodic points}\label{sec:periodicpoints}

We are now ready to complete the proof that the exponential map is chaotic, by proving 
  density of the
   set of periodic
   points. In fact, we shall prove slightly more, namely that \emph{repelling} periodic points
   of $f$ are dense in the plane. Here a periodic point $p$, with $f^n(p)=p$, is called 
   \emph{repelling} if $|(f^n)'(p)|>1$. This ensures that any point $z\neq p$
   close to $p$ is (initially) ``repelled'' away from the orbit of $p$~-- density
   of such points gives 
   another indication that the dynamics of the exponential map is highly unstable! (It is, however,  
   not difficult \lrg{to} show directly that \emph{all} periodic points of $f$ are repelling; see 
   Exercise \ref{ex:allperiodicpointsrepelling}.)

\begin{thm}[Density of repelling periodic points] \label{thm:repel}
  Let $U\subset\C$ be open and nonempty. Then there exists a repelling periodic point
    $p\in U$. 
\end{thm}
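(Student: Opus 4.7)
The plan is to construct a holomorphic self-map $\Psi\colon D\to D$ of a small disc $D\subset U$ such that $\Psi$ is an inverse branch of some iterate $f^N$ and its image is compactly contained in $D$; by Pick's theorem (Theorem \ref{thm:pick}) the map $\Psi$ will then be a strict hyperbolic contraction, and its unique fixed point will be a repelling periodic point of $f$ lying in $U$. For the setup I will use Theorem \ref{thm:escapingdense} to pick $z_0\in I(f)\cap U$ with $z_0\neq 0$, and choose $\varepsilon>0$ small enough that $D\defeq D_\varepsilon(z_0)$ satisfies $\overline{D}\subset U\setminus\{0\}$; I write $z_k\defeq f^k(z_0)$, noting $\re z_n\to\infty$ by Observation \ref{obs:escapingexpansion}.

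Next I glue together two inverse branches. First, Proposition \ref{prop:trans} supplies, for every sufficiently large $n$, a contractive holomorphic branch $\phi_n$ of $f^{-n}$ defined on $D_{2\pi}(z_n)$, with $\phi_n(z_n)=z_0$ and $\diam \phi_n(D_{2\pi}(z_n))\to 0$; I will fix $n$ large enough that $\overline{W_n}\subset D$, where $W_n\defeq \phi_n(D_{2\pi}(z_n))$. Second, because $\re z_n$ is enormous for large $n$, the very same square-and-rectangle construction used in the proof of Observation \ref{obs:montel} produces an open set $X_2\subset D_{2\pi}(z_n)$ on which $f^2$ is injective and such that $f^2(X_2)=D$: I pull $D$ back first through $f$ restricted to a rectangle $R$ of height $2\pi$ (choosing $\re z_n$ large enough that $\overline{D}\subset f(R)$), then through $f$ restricted to a square $S\subset D_{2\pi}(z_n)$ of side $2\pi$ with $\overline{R}\subset f(S)$. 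Let $\psi_2\colon D\to X_2$ denote the resulting biholomorphic inverse branch of $f^2$.

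Setting $\Psi\defeq \phi_n\circ \psi_2\colon D\to\C$, the identities $f^n\circ\phi_n=\operatorname{id}$ and $f^2\circ\psi_2=\operatorname{id}$ give $f^{n+2}\circ \Psi = \operatorname{id}_D$, so $\Psi$ is an inverse branch of $f^{n+2}$; moreover $\Psi(D)=\phi_n(X_2)\subset\phi_n(D_{2\pi}(z_n))=W_n$ with $\overline{W_n}\subset D$. By Pick's theorem, $\Psi$ is strictly contracting in the hyperbolic metric of $D$, with contraction factor uniformly bounded away from $1$ on the compact set $\overline{W_n}$. A Banach-type iteration argument in $D$'s (complete) hyperbolic metric then yields a unique fixed point $p\in\overline{W_n}\subset D\subset U$ of $\Psi$, satisfying $f^{n+2}(p)=p$. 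Since $\Psi(D)\subsetneq D$, the strict-inequality clause of Pick's theorem gives $|\Psi'(p)|<1$, so $|(f^{n+2})'(p)|=1/|\Psi'(p)|>1$, and $p$ is a repelling periodic point.

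The main obstacle is that the very contractive escaping branch $\phi_n$ has the ``wrong'' natural domain $D_{2\pi}(z_n)$, situated far to the right of $D$. The crucial trick is to bridge this gap with a short two-step inverse branch $\psi_2$ of $f^{-2}$ going from $D$ into $D_{2\pi}(z_n)$, built on the large-scale expansive geometry of $f$ already exploited in Observation \ref{obs:montel}; the composition $\phi_n\circ\psi_2$ then loops back into $D$ with overall strong contraction, enabling the fixed-point construction.
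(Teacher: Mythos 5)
Your overall strategy coincides with the paper's: pick an escaping point $z_0\in U$, use the strongly contracting pullbacks $\phi_n$ of Proposition \ref{prop:trans}, bridge from a small disc around $z_0$ back into $D_{2\pi}(z_n)$ by a branch of $f^{-2}$, and extract a fixed point of the composed inverse branch; the paper packages your $\psi_2$ as Lemma \ref{lem:periodic}. The genuine gap is exactly at that bridge. Observation \ref{obs:montel} is only a covering statement, and upgrading it to a single-valued branch of $f^{-2}$ on $D$ with values in $D_{2\pi}(z_n)$ is not automatic: $f$ is not injective on a closed square or rectangle of height exactly $2\pi$, and if $D$ happens to meet the ray which is the image of the horizontal edges of the rectangle $R$ fixed in Observation \ref{obs:montel} (there it is pinned down as symmetric about the imaginary axis and tangent to the inner circle), then \emph{no} branch of $f^{-1}$ defined on all of $D$ has image inside that $R$, so the claimed set $X_2$ with $f^2(X_2)=D$ does not arise from ``the very same construction''. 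One must instead choose the $2\pi i$-translate of the logarithmic image of $D$ (equivalently, reposition the rectangle) adaptively to the argument of $z_0$, and then check quantitatively that the second pullback really lands in $D_{2\pi}(z_n)$: the point obtained can be off-centre by roughly $\pi$ both horizontally and vertically, so an estimate like the paper's $|w-\zeta|<3\pi/2+\log 2<2\pi$ in Lemma \ref{lem:periodic} is needed. That bookkeeping is the actual content of the step you compress into two sentences.

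The fixed-point step also uses tools the paper has not made available: you iterate in the hyperbolic metric of $D$ and invoke its completeness, but the paper only introduces the infinitesimal metric (no distance function, hence no completeness statement), and the claim that the contraction factor is uniformly bounded away from $1$ on $\overline{W_n}$ itself requires an argument (strict inequality in Theorem \ref{thm:pick} plus compactness). This detour is avoidable: your branches already carry Euclidean derivative bounds~-- $\sup_{D_n}|\phi_n'|\to 0$ by Proposition \ref{prop:trans}, and $|\psi_2'|$ is bounded (indeed $\leq 1$ once $\re z_n$ is large, as in Lemma \ref{lem:periodic}) because it is a composition of two logarithm branches whose derivatives $1/|w|$ are controlled on the relevant sets~-- so $\Psi=\phi_n\circ\psi_2$ maps a closed subdisc into itself with, say, $|\Psi'|\leq 1/2$, and the ordinary contraction mapping theorem on that compact set gives the fixed point, with $|(f^{n+2})'(p)|=1/|\Psi'(p)|\geq 2$ giving repulsion directly. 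Your Schwarz--Pick argument at the fixed point is correct as far as it goes, but it only applies once the existence of the fixed point has been secured.
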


The idea of the proof is, again, to begin
    with an escaping point $z_0$. Our goal is to find an inverse branch
   of an iterate of $f$ that maps a neighbourhood of $z_0$ back into itself, with strong 
   contraction. Then the existence of a periodic point follows from the contraction mapping theorem 
   \cite[Theorem 3.6]{burkillburkill} (also known as the \emph{Banach fixed point theorem}). 
   We noticed already in the last section 
   that the disc $D_n$ of radius $2\pi$ around
   the $n$-th orbit point $z_n$ can be pulled back along the orbit in one-to-one fashion (Proposition
   \ref{prop:trans}). We need to be able
   to ``close the loop'', by pulling  back a small disc around $z_0$ into
   $D_n$. In other words, we are looking for a more precise version of
   Observation \ref{obs:montel}, as follows. 

\begin{lem}[Inverse branches of $f^2$] \label{lem:periodic}
 Let $z_0\in\mathbb{C}\setminus\{0\}$. Then there are a disc $\Delta$ centered at 
   $z_0$ and a number $\rho\geq 0$ with the
   following property: 

For any disc $D$ of radius $2\pi$ centred at 
    a point with real part at least $\rho$, there is  
      $\phi\colon  \Delta\rightarrow D$ such that 
      $f^2(\phi(z))=z$ and $|\phi'(z)|\leq 1$ 
      for all $z\in \Delta$. 
\end{lem}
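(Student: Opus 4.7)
The plan is to construct $\phi$ as a composition $L_1\circ L_2$ of two branches of the logarithm, where the branches depend on the centre $\zeta=\xi+i\eta$ of $D$. First I fix once and for all the disc $\Delta\defeq D_r(z_0)$ with $0<r<|z_0|/2$, so that $0\notin\overline{\Delta}$. The set $f^{-1}(z_0)$ consists of the points $w_k=\log|z_0|+i(\arg z_0+2\pi k)$ for $k\in\Z$, and $f^{-2}(z_0)$ consists of $\log|w_k|+i(\arg w_k+2\pi j)$ for $j,k\in\Z$. My aim is to choose integers $k^*$ and $j^*$ so that
\[ \zeta_2 \defeq \log|w_{k^*}|+i\bigl(\arg w_{k^*}+2\pi j^*\bigr) \]
is a preimage of $z_0$ under $f^2$ lying inside $D_{2\pi}(\zeta)$.

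For the real part, note that $|w_k|=2\pi|k|+O(|k|^{-1})$ as $|k|\to\infty$, hence $\log|w_k|=\log(2\pi|k|)+O(|k|^{-2})$. I take $k^*$ to be the positive integer nearest $e^{\xi}/(2\pi)$. A short calculation then gives $|\log|w_{k^*}|-\xi|=O(e^{-\xi})$, which is less than $\pi/2$ provided $\xi\geq\rho$ for a sufficiently large $\rho$. For the imaginary part, since the sequence $\arg w_{k^*}+2\pi j$ is $2\pi$-spaced in $j$, I let $j^*$ be the integer nearest $(\eta-\arg w_{k^*})/(2\pi)$; then $|\im\zeta_2-\eta|\leq \pi$. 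Combining, $|\zeta_2-\zeta|\leq\sqrt{(\pi/2)^2+\pi^2}<2\pi$, so $\zeta_2\in D$.

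Having fixed $k^*$ and $j^*$, let $L_2$ denote the branch of $\log$ on $\Delta$ with $L_2(z_0)=w_{k^*}$, and $L_1$ the branch of $\log$ on a small disc around $w_{k^*}$ (which is far from $0$) with $L_1(w_{k^*})=\zeta_2$. Setting $\phi\defeq L_1\circ L_2$, the identity $f^2\circ\phi=\mathrm{id}_{\Delta}$ is automatic. By the chain rule $|\phi'(z)|=1/(|z|\cdot|L_2(z)|)$; on $\Delta$ I have $|z|\geq|z_0|/2$, while the mean-value inequality applied to $L_2$ (using $|L_2'|\leq 2/|z_0|$) gives $|L_2(z)|\geq|w_{k^*}|-2r/|z_0|\geq\pi k^*$ once $k^*$ is large. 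Hence $|\phi'|=O(1/k^*)$ uniformly on $\Delta$, which is certainly $\leq 1$ for $\rho$ sufficiently large, and a further mean-value estimate shows $\diam\phi(\Delta)\to 0$ as $\rho\to\infty$. Thus $\phi(\Delta)\subset D_{2\pi}(\zeta)=D$ once $\rho$ is chosen large enough (depending only on $z_0$ and $r$).

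The only real technicality is the choice of $k^*$ and the estimate $|\log|w_{k^*}|-\xi|=O(e^{-\xi})$; once that is in place, every subsequent estimate is massively in one's favour, since $|L_2|$ is enormous on $\Delta$, making both $|\phi'|$ and $\diam\phi(\Delta)$ tiny. Crucially, the disc $\Delta$ depends only on $z_0$, not on $D$, as demanded by the statement.
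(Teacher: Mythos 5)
Your proof is correct and follows essentially the same route as the paper: you compose two branches of the logarithm, choosing the first preimage $w_{k^*}$ of $z_0$ so that its modulus matches $|e^{\zeta}|$ (hence $\log|w_{k^*}|\approx\re\zeta$) and the second branch so that the imaginary part lands within $\pi$ of $\im\zeta$, then use the strong contraction of $\phi$ to keep $\phi(\Delta)$ inside $D$. (The only blemish is the asymptotic $|w_k|=2\pi|k|+O(|k|^{-1})$, which omits the bounded term $\arg z_0$, but since you only need $\bigl||w_{k^*}|-e^{\xi}\bigr|$ bounded by a constant, the estimate $|\log|w_{k^*}|-\xi|=O(e^{-\xi})$ and the rest of the argument are unaffected.)
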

\begin{remark}
  In other words, there exists a disc $\Delta$ around $z_0$ that is not only covered by
    $f^2(D)$ (as we know it must be from Observation \ref{obs:montel}), but on which we can
   even define a branch of $f^{-2}$ that takes it back into $D$. 
   It is not difficult to extend the result, with a similar proof, to see that this
   is true for \emph{any} disc $\Delta$~-- and indeed any simply-connected domain~-- whose
   closure 
   is bounded away from $0$ and $\infty$.
\end{remark}
\begin{proof}
 Set $\eps \defeq \frac{|z_0|}{2}$ and $\Delta \defeq  D_{\eps}(z_0)$. \lrg{For any $n\in\Z$, there is a branch 
    $L_n$ of the logarithm}
    on $\Delta$ whose values have imaginary parts between
    $\Arg z_0+(2n-1)\pi$ and $\Arg z_0+(2n+1)\pi)$. Each $L_n$ is continuous on $\Delta$ 
     and satisfies $f(L_n(z))=z$ for all $z\in \Delta$. 

 Consider the sets $V_n \defeq  L_n(\Delta)$, with $n\in\Z$; then $V_n = V_0 + 2\pi i n$.
   So the $V_n$ form a linear sequence of domains
   of uniformly bounded diameter, tending to infinity in the direction of the positive and
   negative imaginary axes. Consider all preimage components of some $V_n$ under $f$, 
   for $n\neq 0$ (since $V_0$ might contain the origin). On each of these,
   we can define a branch of $f^{-2}$ taking values in $\Delta$~-- hence we should show that
   any disc $D$ of radius $2\pi$ contains at least one such component, provided that its centre
   lies sufficiently far to the right. This should be clear from the mapping behaviour of the
   exponential map (Figure \ref{fig:exponential}). Indeed, for every odd multiple of $\pi/2$, there
   is a sequence of sets in question whose imaginary parts tend to this value, and having 
   real parts closer and closer
   together (see Figure \ref{fig:preimage}). 

\begin{figure}
\begin{center}%
\def\svgwidth{\textwidth}%
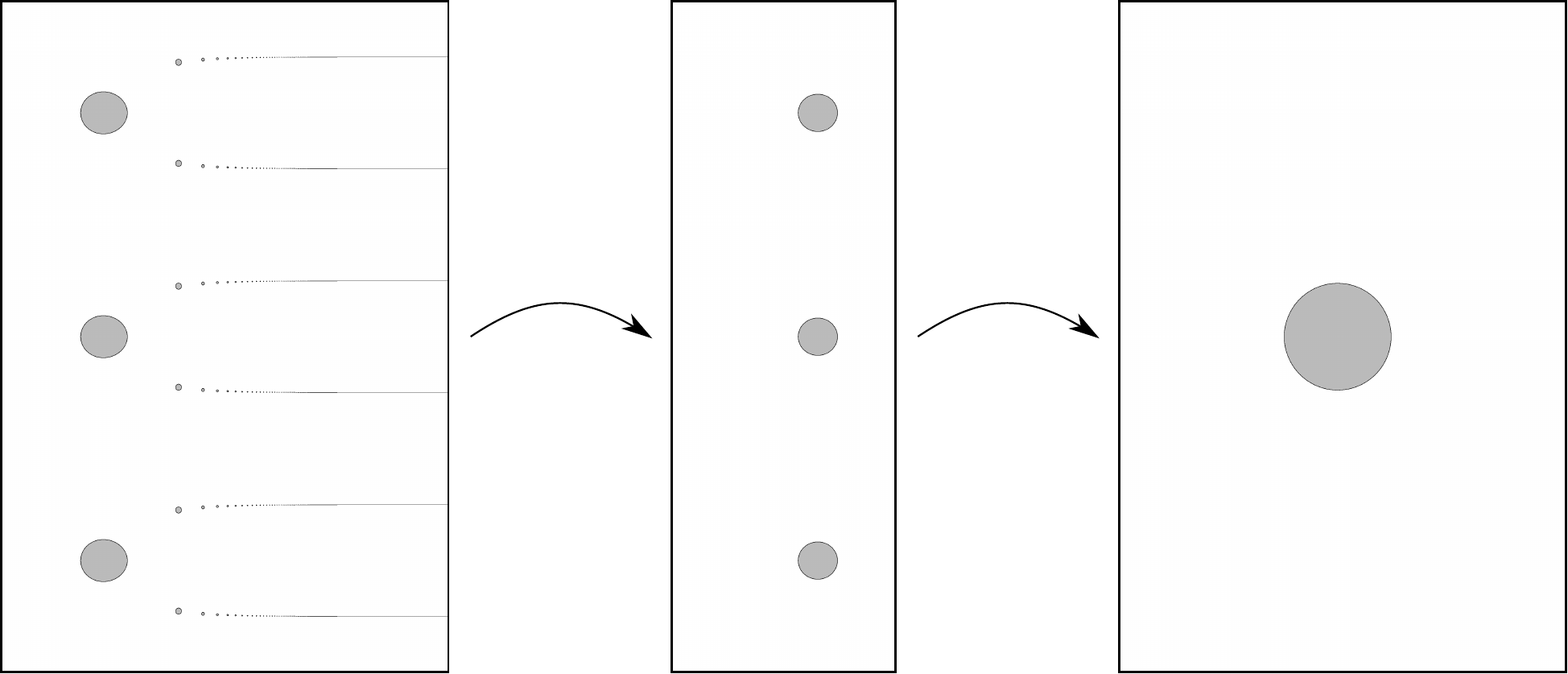%%
\end{center}
 \caption{\label{fig:preimage}This is an illustration of the proof of Lemma \ref{lem:periodic}. Shown are the
   first and second preimage component of the disc $\Delta = D_{\frac{3}{2}}(3)$.}
\end{figure}

  More formally, consider the points $w_n \defeq  L_n(z_0)\in V_n$. Set $\rho_1 \defeq  |w_0|+3$ and 
   let $D$ be a disc of radius $2\pi$ centred at a point $\zeta$ with $\re \zeta \geq \rho_1$. 
   Since $w_n = w_0 + 2\pi i n$, we can find $n_0\geq 1$ such that 
     \begin{equation}\label{eqn:sizeofwn}
       |e^{\zeta}|-\pi \leq |w_{n_0}| \leq |e^{\zeta}|+\pi.\end{equation}
     Furthermore, choose $m$ such that 
     $|\im \zeta - (4m+1)\pi/2|\leq \pi$\lrg{, and let $\psi$ be the branch of the logarithm mapping the 
     upper half-plane to the strip} $\{z\in\C\colon  2m\pi < \im z < (2m+1)\pi\}$.

  Let $w\in V_{n_0}$. Then $|\im\psi(w) - \im \zeta|\leq 3\pi/2$ by choice of $\psi$. Furthermore,
   by definition of $\Delta$ and $L_n$, we have 
    \[ |\re w - \re w_{n_0}| = \left|\log \frac{|w|}{|w_{n_0}|}\right| \leq \log 2. \]
   By~\eqref{eqn:sizeofwn}, we conclude that
     \[    |e^{\zeta}| - 2\pi - \log2 \leq |w| \leq |e^{\zeta}| + 2\pi + \log2. \]
   Dividing by $|e^{\zeta}|$, and recalling that $\re \zeta \geq \rho_1 > 3 > \log 5\pi$, we see that 
    \[ \frac{1}{2} < 1 - \frac{2\pi + \log 2}{|e^{\zeta}|} < 
          \frac{|w|}{|e^{\zeta|}} < 1 + \frac{2\pi + \log 2}{|e^{\zeta}|} < 2. \]
  Hence 
    \[ |\re \psi(w) - \re \zeta| = |\log|w| - \re \zeta| = 
         \left|\log\frac{|w|}{|e^{\zeta}|}\right|
         <  \log 2. \]
   Thus
      \[ |w - \zeta| < 3\pi/2 + \log2 < 2\pi, \]
   and hence $\psi(V_n)\subset D$. Hence the branch $\phi \defeq  \psi\circ\phi_n$ indeed
    maps $\Delta$ into $D$. 
   Moreover, 
         \[ \phi_n'(z) = \frac{1}{|z|} \leq \frac{2}{|z_0|} \]
     for all $z\in \Delta$, while 
           \[ |\psi'(z)|=\frac{1}{|z|} \leq \frac{2}{e^{\zeta}} \] on $V_n$. Hence, if
    we choose $\rho \defeq  \max(\rho_1, \log 4 - \log|z_0|)$, 
     then $|\phi'(z)|\leq 1$ for all $z\in \Delta$, as required. 
\end{proof}

\begin{proof}[Proof of Theorem \ref{thm:repel}] 
 Let $U\subset \C$ be open and nonempty. By Theorem \ref{thm:escapingdense}, there is 
  an escaping point $z_0\in U\setminus\{0\}$. Choose a disc
    $\Delta$ around $z_0$ and $\rho>0$ and as in Lemma
   \ref{lem:periodic}. By shrinking $\Delta$, if necessary, we may suppose that $\Delta\subset U$.

  Let $\Delta_1$ be a smaller disc also centred at $z_0$. so that 
    $\overline{\Delta_1}\subset \Delta$. As in Section \ref{sec:transitivity}, 
   set $z_n\defeq f^n(z_0)$ and $D_n\defeq D_{2\pi}(z_n)$. Let 
    $(\phi_n)_{n\geq n_0}$ be the inverse branches from Proposition
    \ref{prop:trans}. By conclusions
    \ref{item:smalldiameter} and \ref{item:transcontraction} of that
   Proposition, we may assume that $n_0$ is large enough to ensure that 
    $\phi_n(z)\in \Delta_1$ and $|\phi_n'(z)|\leq 1/2$ for all $z\in D_n$. 

  Since $z_0$ is an escaping point, there is $N\geq n_0$ such that 
   $\re z_n \geq \rho$ for all $n\geq N$.
   By Lemma~\ref{lem:periodic}, there is thus a branch $\psi_n$ of $f^{-2}$ that maps
   $\Delta$ into $D_n$, with $|\psi_n'(z)|\leq 1$ for all $z\in\Delta$. 

  It follows that 
    $\phi_n (\psi_n(\overline{\Delta_1}))\subset \phi_n( D_n) \subset \Delta_1$, and that
     $\phi_n\circ \psi_n\colon  \overline{\Delta_1} \to \overline{\Delta_1}$ 
    is a contraction map. As $\overline{\Delta_1}$ is compact, and hence complete, 
    this function has a fixed point $p\in\overline{\Delta}$ by the contraction
    mapping theorem. By construction, 
     \begin{align*} f^{n+2}(p) &= f^{2}(f^n(\phi_n(\psi_n(p)))) = f^2(\psi_n(p)) = p \qquad
    \text{\lrg{and}} \\ 
     | (f^{n+2})'(p)| &= \frac{1}{|\psi_n'(p)|\cdot |\phi_n'(\psi_n(p))|} \geq 2. \end{align*}
    Hence $p$ is indeed a repelling periodic point of $f$, as required. 
\end{proof}

\section{Further results and open problems}\label{sec:further}
  The realization that the exponential map $f$ acts chaotically on the complex plane is not the end of the story. Rather, it leads to further 
    questions about the qualitative behaviour of $f$, and much research has been done 
    since Misiurewicz's \lrg{work}. The picture is still far from complete, and \lrg{several} interesting questions remain open. Here,
   we restrict to a small selection of results and ideas, referring to the literature for further information.

\smallskip

\noindent\textbf{The escaping set of the exponential.}
  The escaping set $I(f)$ of the exponential map played an important role in our proof of Theorems \ref{thm:orbits} and \ref{thm:chaotic}.
   We saw in Theorem \ref{thm:escapingdense} that this set is dense in the plane, and hence 
   it is plausible that a thorough study of its fine structure will yield information also about the non-escaping part of the dynamics. 
    We already \lrg{saw} that $I(f)$ contains the real axis together with all of its 
     preimages under iterates of $f$\lrg{-- but there are many other escaping points!} Indeed, 
     Devaney and Krych \cite{devaneykrych} observed the existence of uncountably many different curves to infinity in $I(f)$, most of which
     do \emph{not} reach the real axis under iteration. Later, Schleicher and Zimmer \cite{schleicherzimmer} were able to show that
     \emph{every} point of $I(f)$ can be connected to infinity by a curve in $I(f)$. 

  Maximal curves in the escaping set are  referred to as ``rays'' or ``hairs'', and \lrg{they provide a structure} that can be
    exploited in the study of the wider dynamics of $f$. However, \lrg{the way in which these rays 
    fit together} to form the entire escaping set is rather non-trivial. For example, \lrg{while each path-connected component of
    $I(f)$ is such a curve, and is relatively closed in $I(f)$ (i.e., rays do not accumulate on points that belong to other rays) the escaping set} 
     nonetheless turns out to be a \emph{connected} subset of the plane \cite{escapingconnected}. Furthermore,
    while many rays end at a unique point in the complex plane, some have been shown to accumulate everywhere upon themselves
     \cite{DevJar02,nonlanding}, 
     resulting in a very complicated topological picture. For which rays this can occur, and whether other types of
     accumulation behaviour are possible, requires further research.

  Taken together, these results provide some 
      indication that the escaping set is a rather complicated object. In fact, even iterated preimages of the negative real axis (all of which are simple curves
     tending to infinity in both directions) result in highly nontrivial phenomena and open questions. 
      In 1993, Devaney \cite{devaneyknaster} showed that the closure of a certain natural sequence of such preimages has 
    some  ``pathological'' topological properties. He also formulated a conjecture concerning the structure of this set and its stability under certain
      perturbations of the
     map $f$, which remains open to this day. 

\smallskip     

\noindent\textbf{Measurable dynamics \lrg{of the exponential}.}
 \lrg{Corollary \ref{cor:denseorbit} (and its proof) means that,
   topologically, ``most'' points have a dense orbit. It is natural
   to ask also about the behaviour of ``most'' points with respect to area:
   If we pick a point $z\in\C$ at random will its orbit
   be dense?} Lyubich \cite{lyubichexp} and Rees \cite{reesexp} independently 
   \lrg{gave an answer} in the 1980s: 
   For a random point $z\in\C$, the \lrg{orbit of $z$ is \emph{not} dense; rather, its set of limit points} coincides precisely
    with the orbit of $0$.
    In other words, after a certain number of steps, the
    orbit will come very close to $0$, and then follow the orbit
    $\{0,1,e,e^e,\dots\}$ for a finite number of steps. Our point might then
    spend some additional time close to $\infty$, 
    until it maps into the left half-plane. In the next step, it ends up even closer to $0$, 
    and so on. 
  We can furthermore also ask about the relative ``sizes'' of the \lrg{sets of points} with various other types
   of behaviour \lrg{(for example, with respect to \emph{fractal dimension}).
   It again turns out that
   there is a rich structure from this point of view; see~e.g.\ \cite{mcmullen,urbanskizdunikunstableexp,karpinskaurbanskiexp}.}

\smallskip 

\noindent\textbf{Transcendental dynamics.}
 To place the material in this paper in its proper context, let us finally discuss \lrg{iterating a holomorphic self-map}
    $f\colon \C\to\C$ of the complex plane in general. \lrg{To obtain interesting behaviour, we} assume that $f$ is non-constant and non-linear.
   As before, \lrg{a} key question is how the behaviour of orbits varies under perturbations of the starting point $z_0$. To this end, one divides the 
   starting values into two sets: The closed set of points near which there is sensitive dependence on initial conditions is called the
   \emph{Julia set} $J(f)$, while its complement~-- \lrg{where} the behaviour is \emph{stable}~-- is the
   \emph{Fatou set} $F(f)=\C\setminus J(f)$. \lrg{(See \cite{waltersurvey} for formal definitions and further background.)}
   By the magic of complex analysis, the~-- rather mild~--
   notion of instability used to define the Julia set always leads to globally chaotic dynamics:

 \begin{thm}[Chaos on the Julia set]\label{thm:fatoujuliabaker}
  The Julia set $J(f)$ is always uncountably infinite, and $f^{-1}(J(f))=J(f)$. Furthermore,
   the function $f\colon J(f)\to J(f)$ is chaotic in the sense of Devaney.
 \end{thm}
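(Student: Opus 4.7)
The plan is to reformulate $J(f)$ in terms of normal families: $z\in F(f)$ if and only if the iterates $\{f^n\}$ form a normal family (in the sense of Montel) on some neighbourhood of $z$. The equivalence between this and the sensitive-dependence definition is standard and relies on the spherical metric framework introduced in Section \ref{sec:background}. With this reformulation in hand, the central tool is \emph{Montel's theorem}: any family of holomorphic maps into $\C$ that omits three common values is normal.

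First, I would show $J(f)\neq\emptyset$. If $\{f^n\}$ were normal on all of $\C$, one could extract a subsequence converging locally uniformly on $\C$ (either to a holomorphic limit or to $\infty$); combined with $f$ being non-linear---hence either a polynomial of degree $\geq 2$ or transcendental entire---a standard growth/degree argument yields a contradiction. Backward invariance $f^{-1}(J(f))=J(f)$ is then immediate from the chain rule and the open mapping theorem: $\{f^n\}$ is non-normal at $z$ if and only if it is non-normal at $f(z)$.

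The engine of the remainder of the proof is the \emph{blow-up property}: for any $z_0\in J(f)$ and any neighbourhood $U$ of $z_0$, the set $\bigcup_{n\geq 0} f^n(U)$ omits at most two points of $\C$, for otherwise Montel's theorem would render $\{f^n|_U\}$ normal, contradicting $z_0\in J(f)$. From this, topological transitivity of $f|_{J(f)}$ follows directly: given open sets $U,V$ meeting $J(f)$, the blow-up property supplies $n$ with $f^n(U)\cap V\neq\emptyset$, and forward invariance of $J(f)$ keeps us in the Julia set. Perfectness of $J(f)$ also follows: an isolated Julia point would, via blow-up applied to a neighbourhood, receive preimages of other Julia points arbitrarily close by, contradicting isolation. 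A non-empty closed perfect subset of $\C$ is uncountable, giving the cardinality statement.

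The principal obstacle is density of periodic points in $J(f)$. The classical Fatou--Julia argument supposes, for contradiction, that some neighbourhood $U$ of $z_0\in J(f)$ contains no periodic point; one then constructs, from iterated preimages of carefully chosen base points, an auxiliary family of holomorphic functions on $U$ that omits three values, and Montel's theorem forces $\{f^n|_U\}$ to be normal, contradicting $z_0\in J(f)$. The delicate part is managing the (at most two) exceptional values that Montel's theorem permits and ensuring the auxiliary family is genuinely value-omitting. For $f(z)=e^z$ this abstract machinery is unnecessary, since the direct contraction-mapping construction of Section \ref{sec:periodicpoints} already provides repelling periodic points in every disc.
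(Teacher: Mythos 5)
You should note at the outset that the paper never proves Theorem~\ref{thm:fatoujuliabaker}: it is quoted in Section~\ref{sec:further} as classical background, with the transitivity/uncountability parts attributed to Fatou and Julia and the crucial part --- density of periodic points in $J(f)$ for a general entire function --- attributed to Baker \cite{bakerrepulsive} (see also \cite{waltersurvey}). So there is no argument in the text to compare with, and your proposal must stand on its own. Its outline of the ``soft'' parts is the standard one and is essentially sound (non-normality definition, complete invariance, blow-up via Montel, transitivity, perfectness, uncountability), but two steps need repair. In the transitivity step you must aim at a non-exceptional point $w\in V\cap J(f)$ and use \emph{backward} (complete) invariance: the blow-up property gives $z\in U$ with $f^n(z)=w$, and then $z\in J(f)$ because $f^{-1}(J(f))=J(f)$; ``forward invariance keeps us in the Julia set'', as written, does not produce a point of $U\cap J(f)$ whose image lies in $V$. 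Likewise, $J(f)\neq\emptyset$ for a transcendental entire map is a real theorem, not a one-line ``growth/degree argument'' (the usual proofs go through fixed points of $f^2$ obtained by Picard-type arguments, or a careful growth comparison along a locally uniformly convergent subsequence), and the perfectness argument needs $J(f)$ to be infinite before iterated preimages can supply Julia points distinct from the putatively isolated one.

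The genuine gap is exactly where you concede delicacy: density of periodic points. The theorem is stated for an arbitrary non-constant, non-linear entire $f$, so your final retreat --- ``for $f(z)=e^z$ this machinery is unnecessary, use Section~\ref{sec:periodicpoints}'' --- proves only the special case already covered by Theorem~\ref{thm:repel}, not the statement under review. The Fatou--Julia auxiliary-family argument you sketch is not carried out, and its delicate points are the entire substance: you must produce two distinct holomorphic inverse branches $g_1,g_2$ of $f$ on the given neighbourhood (impossible at a Picard exceptional value, and requiring care near critical values), verify that the cross-ratio-type family omits $0,1,\infty$ precisely because any coincidence $f^n(z)=z$ or $f^n(z)=g_i(z)$ would yield a periodic point, and then explain how normality of the auxiliary family transfers back to $\{f^n\}$ as maps into $\C\cup\{\infty\}$. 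The paper's own framing warns you that this step is where the entire-function case resisted the classical methods: it is Baker's theorem, historically proved with Ahlfors' five-islands theorem rather than bare Montel (modern proofs use Zalcman-type renormalization). As it stands, your proposal establishes the invariance, blow-up and transitivity statements in outline, but the Devaney-chaos claim --- and hence the theorem in the generality in which it is stated --- remains unproved; the honest fixes are either to carry out the omitted construction with the exceptional values handled explicitly, or to do as the paper does and cite \cite{bakerrepulsive}.
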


 The key part of this theorem is the density of periodic points in the Julia set. 
     \lrg{That 
     $J(f)$ is uncountable and that $f$ is topologically transitive
   was} known already (albeit in different terminology) to Pierre Fatou and (in the case of polynomials) Gaston Julia, who 
   independently founded
   the area of holomorphic dynamics in the early twentieth century. For polynomials~-- and indeed
   for rational functions~-- the density of periodic points in the Julia set was also established
   by Fatou and Julia, but it took about half a century until Baker \cite{bakerrepulsive}
   completed the proof for general entire functions. 

 With this general terminology, Misiurewicz's theorem says precisely that $F(\exp)=\emptyset$. (We emphasize 
   that there are many entire functions with nonempty Fatou set. As an example, \lrg{consider 
    $f(z)\defeq(e^z-1)/4$}. Then $f^n(z)\to 0$ for $|z|<1$, and hence the entire unit disc is in the Fatou set.)
   It can be shown that there are only
   a few possible types of behaviour for points in the Fatou set of an entire function \cite[\S~4.2]{waltersurvey}. Using classical methods, most 
   of these can be excluded fairly easily in the case of the
   exponential map; the difficult part is to show that there can be no
   \emph{wandering domain}, i.e.\ a connected component $U$ of the Fatou set such that $f^n(U)\cap U=\emptyset$ for all $n>0$. 
   Misiurewicz \lrg{used} the
   specific properties of the exponential function, but more general tools for ruling out the existence of wandering domains
   have since been developed.
  The most famous is due to Sullivan,
   who showed that rational functions never have wandering
   domains, answering a question left open by Fatou. Sullivan's argument, which uses deep results from complex analysis, can be
   extended to classes of entire functions containing the exponential map \cite{bakerrippon,alexmisha,goldbergkeen}. As mentioned in the introduction,
   this provides an alternative (though highly non-elementary) method  of
   establishing Misiurewicz's theorem. 
The study of wandering domains, and when they can occur,
    continues to be an active topic of research in transcendental dynamics; we refer to \cite{wandering} for a \lrg{discussion} and references. 

\section{Exercises}\label{sec:exercises}%
{\small%
\begin{exercise}[Topological transitivity and dense orbits] \label{ex:denseorbit}
  Let $X\subset \C$, and let $f\colon X\to X$ be a continuous function. We say that $x_0\in X$ \emph{has a dense orbit} if
   the set of accumulation points of the orbit of $x_0$  is dense in $X$. (Note that this differs subtly from the requirement that
   the orbit of $x_0$ is dense when thought of as a \emph{subset} of $X$. However, the two conditions are equivalent if $X$ has no
   isolated points.)

 Prove: if there is a point with a dense orbit under $f$, then $f$ is topologically transitive.
\end{exercise}

\begin{exercise}[Problems with using Euclidean distance in sensitive dependence] \label{ex:realexp}
 Show that the \emph{real} exponential map $f\colon \R\to\R; x\mapsto e^x$, exhibits sensitive dependence on initial conditions with respect to 
   Euclidean distance $d(x,y)=|x-y|$.
\end{exercise}

\begin{exercise}[Sensitive dependence for distance functions]\label{ex:sphericalsensitiveimpliesallothers}
  Let $f\colon \C\to\C$ be continuous and have sensitive dependence with respect to spherical distance.
 
  Prove: If $d\colon \C\times\C\to [0,\infty)$ is \emph{any}
     distance function that is topologically equivalent to Euclidean distance (i.e., a round open disc around a point $z_0$ contains some disc around $z_0$ in 
    the sense of the distance $d$, and vice versa),  then
    $f$ has sensitive dependence with respect to $d$.
    (So sensitive dependence with respect to spherical distance is the strongest such condition we can impose.)

  \emph{Hint.} Observe that the possible pairs $(f^n(z),f^n(w))$ in Definition \ref{defn:sphericalsensitive} all belong to a closed and bounded
     subset of $\C^2$, which depends only on $\delta$ and $R$. Then use the fact that $d$ is continuous as a function of two complex variables
     (with respect to Euclidean distance).
\end{exercise}

 \begin{exercise}[Automorphisms of the disc] \label{ex:mobius}
   Suppose that $f\colon \D\to\D$ is a conformal automorphism (i.e. holomorphic and bijective)
    with $f(0)=0$. Conclude, using the Schwarz lemma, that $f$ is a rotation around the origin.
    Deduce that any conformal automorphism $f\colon \D\to\D$ is of the form~\eqref{eqn:mobius}.
 \end{exercise}

\begin{exercise}[Expansion of the exponential map] \label{ex:derivativeestimate}
  Strengthen Lemma \ref{lem:hyper} by showing that
   $\|\Deriv f(\zeta_n)\|_U^U\to 1$
   if and only if $\Arg(\zeta_n)\to 0$ and $\im \zeta_n\to 0$. 
  (\emph{Hint.} Use the fact that $|\sin(x)|/|x|\to 1$ if and only if
    $|x|\to 0$.)
\end{exercise}
\begin{exercise}[Strong hyperbolic expansion for large real parts]
  For the domain $\tilde{U}$ in the proof of Theorem \ref{thm:negativereal},
   show that even
   $\|\Deriv f(\zeta)\|_{\tilde{U}}^{\tilde{U}}\to \infty$
  as $\re \zeta \to \infty$.

  (Either use a direct calculation as in the proof of the Claim, or
   a more conceptual explanation as in the first part of the proof of
   Lemma \ref{lem:hyper}.)
\end{exercise}
\begin{exercise}[Extremely sensitive dependence]
 Strengthen Corollary \ref{cor:sensitive} by showing that the exponential map satisfies 
   Definition \ref{defn:sphericalsensitive} for \emph{any} choice of $R,\delta>0$. 
\end{exercise}

\begin{exercise}[No nonrepelling orbits] \label{ex:allperiodicpointsrepelling}
     Use Lemma \ref{lem:hyper} to show that
      all periodic points of the exponential map are repelling. 

   (\emph{Hint.} Observe that, at a periodic point of period $n$, the 
    hyperbolic derivative of $f^n$ agrees precisely with the usual (Euclidean) derivative.)%%%
 \end{exercise}}%%%

%\bibliographystyle{amsalpha}
%\bibliographystyle{amsplain}
%\bibliography{expchaotic_biblio}

\providecommand{\bysame}{\leavevmode\hbox to3em{\hrulefill}\thinspace}
\providecommand{\href}[2]{#2}

\end{document}